\theoremstyle{plain}
\newtheorem{THM}{Theorem}
\newtheorem{thm}{Theorem}[section]
\newtheorem{pro}[thm]{Proposition}
\newtheorem{lem}[thm]{Lemma}
\theoremstyle{definition}
\theoremstyle{remark}
\newtheorem{rem}[thm]{Remark}
\newtheorem{exa}[thm]{Example}
\newcommand{\F}{{\mathcal F}}
\newcommand{\calf}{{\mathcal F}}
\newcommand{\calF}{{\mathcal F}}
\newcommand{\calfp}{\overline{{\mathcal F}}}
\newcommand{\calg}{{\mathcal G}}
\newcommand{\calo}{{\mathcal O}}
\newcommand{\calO}{{\mathcal O}}
\newcommand{\calR}{{\mathcal R}}
\newcommand{\Hhyp}{{\mathbb H}}
\newcommand{\tang}{{\rm tang}}
\renewcommand{\P}{{\mathbb P}}
\newcommand{\Pd}{{\mathbb P}^2}
\newcommand{\Pu}{{\mathbb P}^1}
\newcommand{\C}{{\mathbb C}}
\newcommand{\D}{{\mathbb D}}
\newcommand{\N}{{\mathbb N}}
\newcommand{\R}{{\mathbb R}}
\newcommand{\Q}{{\mathbb Q}}
\newcommand{\Z}{{\mathbb Z}}
\newcommand{\G}{{\mathcal G}}
\newcommand{\tor}{\xymatrix{\ar@{-->}[r]&}}
\newcommand{\PSL}{\mathrm{PSL}}
\DeclareMathOperator{\Aut}{\mathrm{Aut}}
\DeclareMathOperator{\sing}{\mathrm{Sing}}
\DeclareMathOperator{\rk}{\mathrm{rk}}
\newcommand{\equivq}{=_{\Q}}
\newcommand{\der}{\partial}
   \def\MR#1{}
\begin{document}

\begin{abstract}
We propose a study of the foliations of the projective plane induced by simple derivations of the polynomial ring in two 
indeterminates over the complex field.
These correspond to foliations which have no invariant algebraic curve 
nor singularities in the complement of a  line. We establish  the position of these foliations in 
the birational classification of foliations and  prove the finiteness of their   birational symmetries. 
Most  of the results apply to wider classes of foliations.
\end{abstract}

\author[G. Cousin]{Ga\"el Cousin}
\address{GMA-IME-UFF\\
Campus do Gragoat\'a\\  Niter\'oi \\ RJ \\Brazil}
\email{gcousin@id.uff.br }
\author[L.G. Mendes]{
Lu\'is Gustavo Mendes}
\address{Instituto de Matem\'atica\\ UFRGS\\ Porto Alegre\\Brazil }
\email{mendes@mat.ufrgs.br}
\author[I. Pan]{Iv\'an Pan}
\address{Centro de Matem\'atica\\ Facultad de Ciencias\\ UdelaR\\ Montevideo\\Uruguay}
\email{ivan@cmat.edu.uy}
\subjclass[2010]{37F75, 13N15, 14E07}
\title[Foliations and Simple Derivations]{Birational  geometry  of foliations associated to simple derivations}
\thanks{G. Cousin was partially funded by FIRB RBFR12W1AQ, Labex IRMIA and ANR-13-JS01-0002-01.  I. Pan was partially supported by ANII and PEDECIBA of Uruguay}

\maketitle


\section{Introduction and  results}

\epigraph{"(...) mais nous ne serons satisfaits que quand on aura trouv\'e un certain groupe de transformations (par exemple de transformations de Cremona) qui jouera, par rapport aux \'equations diff\'erentielles, le m\^ eme r\^ole que le groupe des transformations birationnelles pour les courbes alg\'ebriques. Nous pourrons alors ranger dans une m\^eme classe toutes les transform\'ees d'une m\^eme equation."}{--- \textup{Henri Poincar\'e}, L'Avenir des Math\'ematiques}

At least since Poincar\'e \cite{hp1891crd}, the study of the algebraic subsets that are left invariant by a given plane polynomial vector field is known to be a difficult matter. For instance, the example~${rx\partial_x+y\partial_y, r\in \Q}$ shows that it is in general impossible to bound the degree of the invariant algebraic curves in terms of the degrees of the vector field's coefficients.
In commutative algebra, the vector fields  that preserve no nontrivial  algebraic subset of the  affine plane correspond to the so-called simple derivations and there is an extensive literature dedicated to the production of such examples, see \cite{Sha, J, NoT, Moulin, CoS, Bru, No, GLl, SK}, among others.

This is also an active field of study in foliation theory, where one considers the extension of the foliation to the projective plane. A key result in this context is the work of Jouanolou \cite{MR537038} that exhibited a family of examples of foliations without any invariant algebraic curve in the projective plane and deduced the (Baire) genericity of such examples. Compare \cite{loray2003minimal} for a generalization. In the opposite direction, the study of foliations in the neighborhood of invariant divisors provides important information on the foliation \cite{claudon2015compact} and the study of such divisors remains crucial for the study of algebraic or Liouvillian integrability of foliations \cite{cousin2016toward}.

At the turn of the century, the birational geometry of foliations has been developed \cite{BrBG,Mc,Me} and one has a birational classification {\itshape \`a la} Enriques-Kodaira for foliations of projective surfaces.
The goal of this article is to explain how the recent tools in foliation theory allow to classify geometrically the simple derivations  of $\C[x,y]$ and to study their symmetries. We will also present a set of examples found throughout the commutative algebra literature and study their relationships.

In algebra,  a     derivation of the ring $\C[x,y]$ is said to be \emph{simple} if it does not fix globally any nontrivial proper ideal.  It corresponds to   a polynomial  vector field   of  $\mathbb{C}^2$  without zeroes  and without 
algebraic  trajectories.

For any  derivation    $\der$, the isotropy  group   
$\Aut(\der)$  is composed by the  $\C$-automorphisms\\ ${\rho:\C[x,y]\to\C[x,y]}$  which verify  
 \[\rho \der = \der \rho.\]  
Although there exist  derivations  with infinite isotropy group, the  main  result of \cite{partI}    is that     $\Aut(\der)$   \emph{is trivial
for   any  simple derivation}.

Take $\rho\in\Aut(\C[x,y])$, $R: \C^2 \to \C^2$   the polynomial 
automorphism  associated to   
$\rho$  and let 
   $\omega_{\der}$  be   the dual  $1$-form  to the vector field $\der = f\, \partial_x+ g \,\partial_y$ (i.e. $\omega_{\der}  =  g \, dx - f \,dy$). 
   Then   $\rho \der = \der \rho $ is   equivalent  to \[R^*(\omega_{\der} )  = Jac(R) \cdot \omega_{\der}\]
where $Jac(R)\in \C^*$  is the Jacobian determinant of $R$.
A less   restrictive  condition is that
\[ R^*(\omega_{\der} ) = c \cdot  \omega_{\der},
\]
for some  
$c\in\C^*$ (depending  on  $R$). This means
that $R$ preserves  the \emph{foliation} $\calf_{\der}$   of $\C^2$  associated to $\der$ (or to   $\omega_{\der}$), see Remark~$\ref{constant factor}$ p. \pageref{constant factor}. 

We denote $Pol(\calf_{\der})$  the 
group consisting of  polynomial 
automorphisms of $\mathbb{C}^2$  which  
preserve the   foliation $\calf_{\der}$. There  is a natural  homomorphism
\[ \Aut(\der) \hookrightarrow Pol(\calf_{\der}).\]

Let  us denote  $\calf$  the singular holomorphic foliation    of   the  projective plane $\P^2=\C^2\cup L_\infty$
which  is  the  extension of   $\calf_{\der}$  in $\C^2$ .  All along the paper,  if $\der$  is a simple derivation,    both   $\calf_{\der}$ in $\C^2$  and  its extension   $\calf$ in $\P^2$  are called    \emph{foliations 
associated to simple  derivations}.  

But  the reader must be warned that,  even if $\calf_{\der}$ has no singularity, 
some 
singularities  of $\calf$  
along the line at infinity $L_\infty$  are unavoidable, see \cite[Prop. $2.1$]{BrBG}. Also  beware that  the line at infinity    $L_\infty$  may be  
 invariant  by  $\calf$.

Denote  $Bir(\calf)$ the 
group of  birational transformations of  $\P^2$ which preserve a foliation   $\calf$; the elements in $Bir(\calf)$ are sometimes called \emph{birational symmetries} of $\calf$. If  $\calf$  
extends  a foliation $\calf_{\der}$ of $\C^2$,  then there is a natural 
homomorphism 
\[Pol(\calf_{\der}) \hookrightarrow  Bir(\calf)\]
whose meaning is that  a  (non-linear)  polynomial automorphism of $\C^2$   
extends to  a special  type of 
    birational map of $\P^2$.  Namely, a birational map with   a unique (proper) point of indeterminacy $p\in L_\infty$,  whose  net  effect on $\P ^2$ is to 
    replace $L_\infty$  by the 
    strict transform  of  the last exceptional curve 
  introduced   in the elimination  of the  indeterminacy point.

In Section~$\ref{recobrimentos}$, we propose a construction of  simple derivations $\der$ with arbitrary large finite~$Pol(\calf_{\der})$.
This shows the optimality of the following.

\begin{THM}\label{finite}
 Let $\F$  be a foliation of $\Pd$ whose restriction 
 $\F|_{\C^2}$ to $\C^2$  has   no algebraic invariant curve. Then $Bir(\F)$  is finite; in particular, a  foliation associated to  a simple  derivation admits only finitely many birational symmetries.
\end{THM}

Theorem~\ref{finite} is actually derived from the next  result  which determines, in particular, the positions that foliations  associated  to simple derivations may occupy in the   
birational  classification of foliations. This classification is based on the notion of
\emph{Kodaira dimension} of a foliation, denoted $\kappa(\calf)$, whose  range is  $\kappa(\calf) \in \{-\infty, 0,1,2\}$, see  Section~$\ref{Terminology}$.

 For a reduced divisor in a quasiprojective surface, we say it  is a \emph{rational} curve if its projective  closure  has geometric  genus zero. By  
a \emph{Riccati foliation}\label{riccati} on $\Pd$ we mean a foliation  which, up to a birational modification of $\Pd$, is  everywhere transverse  to the general fiber of a rational fibration \label{page_riccati}(see \S \ref{section_riccati} for more details).

\begin{THM}\label{kodsimples}

 Let $\calf$  be a foliation    of the projective plane such  that the restriction 
 $\calf|_{\C^2}$ has no invariant rational curve.

\begin{enumerate}[$($i$)$]

\item\label{kodsimples i}Then  $\kappa (\calf) \geq 1$;
\item \label{kodsimples iii}If  $\calf|_{\C^2}$ has no  invariant algebraic curve, then   $\kappa (\calf)  = 1$ if and only if  $\calf$ 
is a Riccati foliation.

\item \label{kodsimples iv}The  cases  $\kappa (\calf)  \in \{ 1 ,   2\}$ are  realized  by foliations  associated to simple derivations.
\end{enumerate}
\end{THM}
Note that Theorem \ref{kodsimples} applies to a class of foliations which is larger than the one of foliations associated to simple derivations and that case \ref{kodsimples}-($\ref{kodsimples iii}$) includes the foliations associated to \emph{Shamsuddin derivations} \textit{cf.}  \cite{Sha}.

In fact, we obtain Theorem~\ref{kodsimples}-($\ref{kodsimples i}$) as a special case of the following with $(X,D)=(\P^2,L_{\infty})$.
\begin{THM}
Let $X$ be a smooth projective rational surface and let $D$ be  a reduced divisor on $X$.
 Suppose that $X\setminus D$ is simply connected. Then any foliation $\F$ on $X$ that possesses no invariant rational curve outside $D$ satisfies $\kappa(\F)\geq 1$.
\end{THM}

  In  Section \ref{Exemplos} we study the foliations  associated to examples of simple derivations  found throughout the literature and discuss their birational equivalence.

\noindent {\bf Acknowledgements.} We thank   Charles  Favre  and   Jorge  Vit\'orio  Pereira   for useful discussions. 
We are also grateful to the anonymous referee whose criticisms and indications helped us to improve this article.

\section{Preliminaries on foliations}\label{Terminology}
The  paper relies  on  concepts and results of the theory of singularities and birational geometry of foliations on algebraic complex surfaces. We  present some basic facts  in this preliminary section but along the paper,  when necessary, we  refer the reader to the corresponding  sections of 
\cite{BrBG} or  \cite{BrPisa},  where the  theory is    
masterfully explained. 

\subsection{First definitions}On a smooth complex surface $X$, a foliation $\F$ is given by an open
covering $(U_i)$ of $X$ and local vector fields $v_i\in H^0(U_i,TX)$ with \emph{isolated zeroes} such that there exist non vanishing holomorphic functions $(g_{ij})$ on the intersections $U_i\cap U_j$ satisfying 
\begin{equation}\label{cocycle} v_i=g_{ij}v_j.
\end{equation}
The locus defined by the vanishing of the local vector fields $(v_i)$ is called the \emph{singular locus} of $\F$ and denoted $\sing(\F)$. 

The cocycle $(g_{ij})$ defines a line bundle $T^*\calf$ on $X$, its dual is denoted $T\calf$.  Relation~$(\ref{cocycle})$ means that the family $(v_i)$ defines a section of $T^*\calf\otimes TX$ and hence a sheaf map $T\F \to TX$.
Two data $((U_i),(v_i))$, $((U'_j),(v'_j))$ are said to define the same foliation if the images of the associated sheaf maps are the same.

 The line bundle $T\F$ is called the \textit{tangent bundle} of the foliation and its dual $T^*\F$ is the \textit{cotangent bundle} of $\F$.  As defined, the line bundle $T\F$ is not canonically attached
 to $\F$, but its isomorphism class in the \emph{Picard group} $\mathrm{Pic}(X)$ of $X$ is. 

One may also consider foliations on normal singular complex surfaces. They are defined by the datum of a foliation on the complement of the singular locus of the surface.

\subsection{Rational vector fields and $1$-forms}
If $X$ is smooth projective, $T\F$ possesses a non trivial rational section and $\F$ can be given by a rational vector field $\mathcal{X}$, hence in $\mathrm{Pic}(X)$ we have  \[T\F=\mathcal{O}_X(div(\mathcal{X})),\] 
where $div(\mathcal{X})$ denotes the divisor of zeroes and poles of $\mathcal{X}$. 
On a suitable (Zariski) open covering the local vector fields $v_i$ are defined by setting $v_i=h\mathcal{X}_{\vert U_i}$, for a well chosen rational function $h$ on $U_i$. This is how we associate a foliation to a simple derivation: we have a \emph{preferred projective compactification of $\C^2$}, namely $\Pd=\C^2 \cup L_{\infty}, (x,y)\mapsto (x:y:1)$, and a polynomial vector field on $\C^2$ extends to a rational vector field on $\Pd$.

One can also define a foliation by local holomorphic $1$-forms with isolated zeroes $(\omega_i)$ that vanish on the local vector fields $(v_i)$. If $X$ is projective, such a family $(\omega_i)$ is obtained by locally eliminating poles and codimension 1 zeroes of a non trivial rational $1$-form. Hence, on a smooth projective surface, a foliation may be defined by either a  non trivial rational $1$-form or a a non trivial rational vector field.

\subsection{Curves and foliations}
A curve $C$ is termed \emph{invariant by $\F$} or \emph{$\F$-invariant} if it is tangent to the local vector fields defining $\F$.  When a compact  curve $C\subset X$ is \emph{not} $\calf$-invariant we have the very useful \textit{formula}
\[T^*{\calf} \cdot  C = \tang(\calf,C) - C\cdot C,\]
where  $\tang(\calf,C)$ is the   sum of orders of tangency between $\calf$ and $C$, \textit{cf.} \cite[Prop. $2.2$]{BrBG}.  
\subsection{Degree of plane foliations}
If $X=\P^2$ is the projective plane, the \emph{degree} of $\calf$ is $\deg(\calf)\in \Z_{\geq0}$ defined as the number of tangencies of $\F$ with a general projective line. In this case, the previous formula gives 
\[T^*{\calf} = \mathcal{O}_{\Pd}( \deg(\calf) -1 ).\]
Moreover, if $\F$ is induced by a polynomial vector field with isolated zeroes $P(x,y)\partial_x+Q(x,y)\partial_y$ on $\P^2$ then, using the usual total degree for bivariate polynomials,
\[ \deg(\calf) =\left \lbrace
 \begin{array}{l} max(\deg P,\deg Q)\mbox{ if the line at infinity is invariant,}\\
 max(\deg P,\deg Q)-1\mbox{ otherwise.}
 \end{array}
 \right.
 \]

\subsection{Camacho-Sad formula}\label{CSad} 
The self-intersection of a smooth compact invariant curve $C$ can be computed from certain indices of $\F$ along $C$.
For every $p\in C$, consider a local defining $1$-form $\omega$ for $\F$ around $p$. If $C$ has a local equation $f=0$ at $p$, one has a local decomposition $\omega=hdf+f\eta$. Define the Camacho-Sad index $CS(\F,C,p)$ as $Res_p(-\eta/h)$. The Camacho-Sad \textit{formula} is then
\[C\cdot C=\sum_{p\in C}CS(\F,C,p).\]

\subsection{Birational maps and foliations}\label{BirFo}
Let $X$ and $Y$ be projective surfaces with at most normal singularities 
and $\phi:X\dasharrow Y$  a birational map. If we have a foliation $\F$ on $X$ given by the rational vector field $\mathcal{X}$, we can define a foliation $\phi_*\F$ on $Y$ as the one defined by the rational vector field $\phi_*\mathcal{X}$. Conversely, from a foliation $\mathcal{G}$ on $Y$, one defines $\phi^*\mathcal{G}:=(\phi^{-1})_*\mathcal{G}$.  
We say that the foliations $\calf$ and  $\phi_*\F$ are \emph{birationally equivalent} and that $\phi_*\F$ is a \emph{birational model of $\F$}.
\begin{rem}\label{constant factor}
If $\phi :\Pd\dasharrow\Pd$ is induced by a polynomial automorphism $R$ of $\C^2$ and  $\F$ is given by a polynomial vector field $\mathcal{X}$ on $\C^2$ with isolated zeroes, the condition $\phi^*\F=\F$ is tantamount to $R_*\mathcal{X}=h\mathcal{X}$ for a suitable rational function $h$. However, as $R$ is a polynomial automorphism,
the vector $R_*\mathcal{X}$ is a polynomial vector field on $\C^2$, with isolated zeroes.
In particular, the factor $h$ is a constant $c\in\C^*$. A similar reasoning works with polynomial $1$-forms.
\end{rem}

\subsection{Singularities}\label{rednonred}
Around a singular point $p\in X$,  using local centered coordinates $z,w$ the foliation $\calf$ is defined by a holomorphic vector 
field $v = f(z,w) \frac{\partial  }{\partial  z } + g(z,w )\frac{\partial  }{ \partial w}$ with $f(0,0)=g(0,0)=0$ and $\mbox{gcd}(f,g)=1$. The \emph{Milnor number} of the singular point is defined as
\[\mu(p,\calf) = \mbox{dim}_{\mathbb{C}} \frac{ \mathcal{O}_p  }{< f,g >}\] 

Denote $\lambda_1, \lambda_2$ the eigenvalues of the linear part (first jet) of $(z,w)\mapsto (f(z,w),g(z,w))$.  We say that $p$ is a \emph{reduced} singularity of $\F$ if at least one of them, say $\lambda_2$, is not zero and if $\lambda:=\lambda_1/\lambda_2\not \in \Q_{>0}$; otherwise the singularity is \emph{non-reduced}. A special case of non-reduced singularity occurs when the linear part is the identity, in this case $\lambda= 1$, and we say that $p$ is a \emph{radial  point}. 

If $\lambda \neq 0$ we say that the singularity is \emph{non degenerate};  otherwise  we call it a \emph{saddle-node}. The separatrix of a saddle-node which is tangent 
to the non-zero eigenvector is called the \emph{ strong separatrix}. If there is a second separatrix, it is called \emph{weak separatrix}.

We say that $p$ is a \emph{Morse  point} if it is non degenerate and, in suitable coordinates, admits a local  holomorphic  first integral of the form $\phi(z,w) = z^2+w^2+\mbox{h.o.t.}$; note that for Morse  points $\lambda= -1$.

\subsection{Reduced and   relatively minimal models of foliations} A foliation $\calf$ on a smooth surface is said to be {\it reduced} if all its singularities are reduced. After Seidenberg \cite{Sei}, foliations on smooth projective  surfaces always admit a \emph{reduction of singularities}: a birational morphism $\Sigma:M\to X$ obtained as a composition of blowing-ups  such that $\calfp:=\Sigma^*\F$ is a reduced foliation. 

Such a {\em reduced model} $\calfp$ is not unique. Indeed, by performing a blowing-up at either a non-singular point or a reduced singularity the transformed foliation remains reduced. Doing such an ``unnecessary'' blowing-up creates a \emph{foliated exceptional curve} or \emph{$\calfp$-exceptional curve}: a rational curve of self-intersection $-1$ whose contraction to a point $q$ yields a foliated surface with at most a reduced singularity at $q$. A  reduced  model $\calfp$  is called  a  \emph{relatively  minimal model}
when it is free of  \emph{$\calfp$-exceptional curves}.

\subsection{Kodaira dimension} The  \emph{Kodaira  dimension} $\kappa (\calf)$ of $\calf$ is defined by
\[\kappa (\calf )  :=  \limsup_{n\to +\infty}  \, \frac{1}{\log n}\, \log h^0( (T^*{\calfp})^{\otimes n}),\]
for any reduced model $\calfp$.
This is a birational invariant with values in $\{-\infty, 0,1,2\}$.
\subsection{Zariski decomposition}
If  $\calf$ is not birationally equivalent to  a rational fibration,    Miyaoka and Fujita's results   assure  that 
 the cotangent  line bundle   $T^*\calf$ admits a unique so-called {\em Zariski 
decomposition} \cite[Thm. $14.14$ p. $220$]{badescu2013algebraic} \[T^*\calf \equivq  {\bf N}+{\bf P},\] where

\begin{itemize}
\item $\equivq$ means  equality in the group of rational divisors $\mathrm{Div}(X)\otimes_{\Z}\Q$,

\item the  \emph{positive part} ${\bf P}$ is a \emph{nef}  $\Q$-divisor (i.e. ${\bf P}\cdot C\geq 0$  for every curve $C$),

\item the \emph{negative part} ${\bf N}=\sum_j  \alpha_j {\bf N}_j$  is a $\Q^+$-divisor ($\alpha_j\in \Q^+$) and each connected component  of $\cup_j {\bf N}_j$ is 
contractible to a normal singularity,

\item ${\bf P}\cdot {\bf N}_j = 0$, $\forall j$. 
\end{itemize}

The numerical Kodaira dimension of $T^*\calf$ is then defined depending on the numerical properties of $\mathbf{P}$: 
\[\left \lbrace \begin{array}{rl} \nu(T^*\calf)=0 &\mbox{ if } {\bf P}\equiv 0 ,\\
\nu(T^*\calf)=1  &\mbox{ if } {\bf P}\cdot {\bf P}=0 \mbox{ and } {\bf P} \not \equiv 0, \\
\nu(T^*\calf)=2  &\mbox{ if } {\bf P}\cdot {\bf P}>0;
\end{array} \right.\]
where ${\bf P}\equiv 0$ means ${\bf P}$ intersects any divisor trivially.
The numerical Kodaira dimension $\nu(\calf)$ of a foliation $\calf$ is then defined as the numerical Kodaira dimension $\nu(T^*\calfp)$, where $\calfp$ is any reduced birational model of $\calf$.

\subsection{Birational classification}
The birational classification of foliations is done in the spirit of Enriques-Kodaira classification of surfaces, according to the values of $\nu(\F)$ and $\kappa(\F)$. 

We restrict now to mentioning a striking consequence of the classification: $\nu(\F)=\kappa(\F)$ unless $\nu(\F)=1$ and $\kappa(\F)=-\infty$ in which case $\F$ is a   Hilbert modular foliation as they will  be described in Section~\ref{hilbert}.
A few other aspects will be used in the text with suitable references.

\subsection{Nef models of foliations}If   $\calfp$  is a relatively  minimal  (reduced) model  of  $\calf$  and if  $\calfp$  is not a rational fibration,  McQuillan's theorem 
\cite[Thm. $8.1$]{BrBG}  assures  
that the   support 
of ${\bf N} $     in  $T^*\calfp \equivq  {\bf N}+{\bf P} $    is a union of so-called \emph{maximal 
 $\calfp$-chains}.

A  $\calfp$-chain is 
a chain of invariant rational  $(-n)$-curves, with $n\geq  2$, which starts  with a  curve containing just one singularity of $\calfp$ and where the other components,
 if it has more than one,
contain two singularities,  all singularities being reduced and  non-degenerate.  The contraction of a $\calfp$-chain produces a rational surface singularity, more precisely,
a cyclic quotient singularity. The induced foliation on the resulting singular surface is called a \emph{nef model}  of $\calfp$, denoted along the paper by  $\calf_{nef}$.
 
\subsection{Rational fibrations and Riccati foliations}\label{section_riccati}
Consider a rational fibration $p:X\to B$,  $x\in B$ a regular value of $p$ and a foliation $\F$ on $X$. Assume that $\calf$ may be defined by a holomorphic vector field $v$ around the fiber $F:=p^{-1}(x)$.
\subsubsection{}\label{alter fibr/ricc} Let  $U\subset B$ be a coordinate patch  containing $x$ such that $p|_{U}$ is isomorphic to $\pi: F\times U\to U$ and $v$ si holomorphic on $U$. 
For a coordinate $w$ centered at $x$ on $U$ and an affine chart $z$ on $F\simeq \Pu$, one has
\[ (\star)\label{localform}~~~ v=[a(w)+b(w)z+c(w)z^2]\partial_z+d(w)\partial_w,\]
where $a,b,c,d$ are holomorphic on $U$. If there exists an $\F$-invariant section $\Gamma$ of $p$ over $U$, up to an automorphism of  $\pi$ we can suppose it is $z=0$, in which case $a=0$. We see that the vanishing of $v$ on $\Gamma$ implies that $d=0$ and that all the fibers of $p$ are $\F$-invariant.
\subsubsection{} \label{inv fiber} The condition $d\neq 0$ means that $\F$ is everywhere transverse to the general fiber of $p$. In this case, one says that $\F$ is a \emph{Riccati foliation} with respect to $p$ and the fibration $p$ is called 	an  \emph{adapted fibration} for $\F$. The fibers of $p$ which are not everywhere transverse to $\calf$ are called \emph{special fibers}. Untill the end of Section~\ref{monod} we suppose $\calf$ is a Riccati foliation with respect to $p$. We also assume $v$ does not vanish on $F$, which can be arranged up to dividing $v$ suitably.

The $\F$-invariance of the regular fiber $F$ is then characterized by $d(0)=0$.
It is equivalent to the existence of a singularity of $\F$ in the fiber $F$.
\subsubsection{}\label{monod}
Restricting $\F$ to  the complement of the  special fibers $(p^{-1}(x_i))_{i=1,\ldots,n}$, one obtains a flat $\Pu$-bundle and its monodromy representation $\pi_1(B\setminus \{x_1,\ldots,x_n\})\to \mathrm{PSL}_2(\C)$. 
If the basis of the fibration is $B=\Pu$ and there is  only one special fiber $F$, the monodromy is trivial and any  $\calF$-invariant local section at  $F$   extends to a  global invariant  section.

The next paragraph explains the local structure of nef models of Riccati foliations.
\subsubsection{}
Let $\calfp$ be a reduced model of a Riccati foliation $\calf$. After \cite{BrPisa}, up to birational morphism (contraction of curves in $\calfp$-invariant fibers), one reaches a nef model  of the Riccati foliation for which  each fiber has one of the local types $(a),(b),(c),(d),(e)$ described below. The figure is adapted from 
 \cite[p. 20]{BrPisa}.

\[\includegraphics[height= 5.00cm, width= 13.00cm]{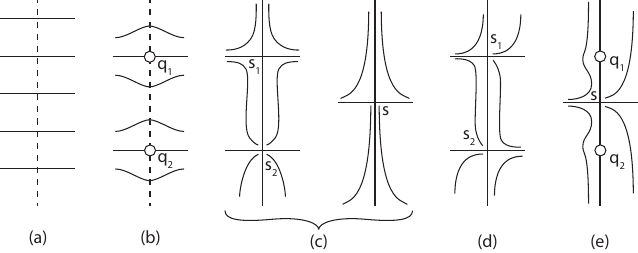}\]

\begin{itemize}
 
 \item[$(a)$] The fiber is regular and transverse to the foliation.
 
 \item[$(b)$] The surface admits two cyclic quotient singularities $q_1,q_2$ of the same order $k\geq 2$ along the fiber. The fiber has order $k$ and is not invariant by the foliation.
 
 \item[$(c)$] There are two possibilities: the fiber is regular and invariant and either contains two non degenerate singularities   $s_1,s_2$  or a unique   saddle-node $s$ with  Milnor number $\mu= 2$  whose strong separatrix is transverse to the fiber.
 
 \item[$(d)$]  The fiber is regular and contains two saddle-nodes
    whose strong separatrices  are contained in the  fiber.
 
 \item[$(e)$]  The surface admits two quotient singularities with order equal to $2$ along the fiber;   there is a saddle  node on the fiber
   whose strong separatrix is
 in the
 fiber. The fiber has order~$2$.
 
\end{itemize}

The cotangent line  bundle of $\F_{nef}$ is obtained as the pull-back of a $\Q$-divisor in the basis, in particular $\kappa(\F)$ is at most $1$. 

If the basis of the adapted fibration  is  $B=\Pu$ and there is  only one special fiber for $\calf_{nef}$, the triviality of the 
monodromy  prevents this fiber from being of type $(b)$ or $(c)$. However, cases $(d)$ and $(e)$ are realized in Examples  \ref{shamgrau8} and \ref{nilp}, respectively.

\subsubsection{}\label{elem}
A key tool in the birational geometry of Riccati foliations are the \emph{elementary transformations}. These are birational modifications of ruled surfaces consisting  of one blowing-up and a successive contraction: after a blow-up at $p$, the total transform of the fiber of $p$ is the sum of  two $(-1)$-curves, contraction of the strict transform of the fiber yields a new ruled surface.
The self-intersection of any section passing through $p$ decreases of $1$ after such a transformation. For other sections the self-intersection increases of one.
 
\section{Proof  of Theorem~C}
In order to prove Theorem~C, we will rule out successively $\kappa (\calf) = -\infty$ and $\kappa (\calf) = 0$.
 
The birational classification of foliations with  $\kappa (\calf) = -\infty$ (\cite{BrPisa} or \cite{Mc})  asserts that this class is composed by 
\emph{rational  fibrations}  and by foliations birationally equivalent to   the  so-called \emph{Hilbert modular foliations}. 
Hence, in order to prove $\kappa (\calf) \neq -\infty$, we only need to exclude Hilbert modular foliations. 
\subsection{Exclusion of Hilbert modular foliations}\label{hilbert}
A  \emph{Hilbert modular  surface}   is  
 defined (following \cite[p. 25]{BrPisa}) as 
a (possibly  singular)  projective surface $Y$  containing a  (possibly empty) 
curve $C\subset Y \setminus \sing(Y)$ such that:
\begin{itemize}
\item each connected component of $C$ is a cycle of smooth rational curves, contractible to a normal singularity; if a connected component of $C$ is irreducible, then  it is a rational nodal curve.

\item $Y\setminus C$  is uniformised  by the bidisc $\Hhyp \times \Hhyp$,
i.e. we have an isomorphism of analytic spaces \[Y\setminus C  \simeq Y_{\Gamma}:=(\Hhyp \times \Hhyp)/\Gamma\]
where $\Gamma$  is a lattice (discrete finite covolume subgroup) of $\PSL(2,\R)^2\subset \Aut( \Hhyp \times \Hhyp  ) $;

\item $\Gamma$  is irreducible  (i.e. does not contain a finite index subgroup of the
form $\Gamma_1 \times \Gamma_2$  with $\Gamma_j \subset \PSL(2,\R)$, $ j = 1, 2$).
 \end{itemize}
The natural singular foliations of the Hilbert modular surface $Y$  which come from the horizontal and vertical foliations by discs of  $\Hhyp \times \Hhyp$ are called \emph{Hilbert modular foliations}. Both foliations
 leave invariant the curve $C$
 and, in the desingularization of the surface, they leave invariant the exceptional divisors.

\begin{pro}
 Let $X$ be a smooth projective surface and let $D$ be  a reduced divisor on $X$ such that  $X\setminus D$ is simply connected. 
Let  $\calf$   be a   foliation of $X$   such that   $\calf|_{X\setminus D}$  has no  invariant  rational curve. 
Then   $\calf$  is not 
 birationally equivalent to a   Hilbert modular  foliation.  
 
\end{pro}

\begin{proof}
Suppose by contradiction that $\F$ is birationally equivalent to a  Hilbert modular foliation.
Let  $\overline{\calf}$  be a  relatively minimal model of $\calf$ on a suitable smooth projective surface $M$. Then $M$ is the minimal  
desingularization  of a  Hilbert modular surface $Y$ (\textit{cf.} \cite[Thm. $5.1$]{BrBG})  and $\overline{\calf}$  is the transform of  a Hilbert modular foliation on $Y$.

As described above, one has $Y=Y_{\Gamma}\sqcup C$, for an irreducible lattice $\Gamma$ of $\PSL(2,\R)^2$ and $C$ a contractible cycle of rational curves in $Y$.

Let \[U:=X \setminus (D\cup \sing(\calf)).\]
Then, as it contains no singularity of $\F$ nor invariant rational curves, $U$  is isomorphic to a  non-empty Zariski open subset of $M$  and  is also isomorphic  to    a non-empty Zariski open subset of 
$\dot{Y}_{\Gamma}$, the complement
of the quotient singularities of $Y_{\Gamma}$. The group homomorphism \[\pi_1(U)\rightarrow \pi_1(\dot{Y}_{\Gamma})\]
induced by the injection  is surjective.
As $U$ is simply connected, so is $\dot{Y}_{\Gamma}$.

By the irreducibility of $\Gamma$, the subset $F\subset\Hhyp \times \Hhyp$ given by the points that have 
non trivial stabilizer under $\Gamma$ is discrete (\textit{cf.} \cite{Shi}). The action of $\Gamma$ on $\Hhyp \times \Hhyp$ can be restricted to an action on 
$(\Hhyp \times \Hhyp)\setminus F$. 
The quotient map \[(\Hhyp \times \Hhyp)\setminus F\rightarrow [(\Hhyp \times \Hhyp)\setminus F]/\Gamma=\dot{Y}_{\Gamma}\] is
then a covering, in the strict sense of topologists. As $(\Hhyp \times \Hhyp)\setminus F$ is simply connected, 
this proves that the fundamental group of $\dot{Y}_{\Gamma}$ is isomorphic to $\Gamma$. 
But $\dot{Y}_{\Gamma}$ is simply connected, so  $\Gamma$ is trivial, a contradiction.
\end{proof}

This result is  sharp if $(X,D)=(\P^2,L_\infty)$: a pair of  Hilbert modular foliations  with exactly  one  invariant rational quintic curve in $\C^2$ is given in \cite{MePe}. It gives rise to an irreducible nodal curve invariant by the reduced models of the  foliations.  
\subsection{Exclusion of vanishing Kodaira dimension}
We procede by contradiction and suppose we have a foliation $\calf$ that satisfies the hypotheses of  Theorem~C and~$\kappa (\calf ) = 0.$
Let  $\Sigma: M \to X$ be a reduction of  singularities 
of $\calf$ and denote the resulting reduced  foliation on $M$  by $\calfp$.

Denote by $q: M \to M^{\prime}$
a finite (possibly trivial) sequence of blowing-downs  of     foliated  exceptional curves, such that  the foliation 
$\calfp^{\prime} := q_*\calfp$
 is  a relatively  minimal model of $\calf$.
By a theorem of McQuillan \cite[Thm. $9.2$]{BrBG}, we have $\nu(\F)=0$.
We can then apply the following, which is actually the simplest case of another  theorem of McQuillan \cite[Thm. $8.2$]{BrBG}.

\begin{lem}
Let $X$ be a simply connected projective variety endowed with  a relatively minimal foliation $\G$ such that $\nu(\G)=0$. Then $\G$ can be  defined by a holomorphic vector field with isolated zeroes.
\end{lem}
\begin{proof}
Let $T^*\G\equivq\mathbf{N}+\mathbf{P}$ be the Zariski decomposition of $T^*\G$.
Since $X$ is simply connected, we have $h^1(X,\C)=0$, in particular, $h^1(X,\mathcal{O}_X)=0$, by Hodge theory. Under these conditions, numerical equivalence of divisors on $X$ is the same as equality modulo torsion. The numerical triviality of ${\bf P}$ then implies  $T^*\G \equivq {\bf N}$.  The non trivial coefficients of ${\bf N}$ are not integers \cite[Addendum p. $100$]{BrBG} hence, $T^*\G$ being an integral divisor, ${\bf N}=0$ and $T^*\G$ is a torsion line bundle. In particular, if $n$ is the order of torsion, from the cohomology exact sequence induced by \[0\to\Z/n\Z\to\calo_X^*\stackrel{(\cdot)^n}{\longrightarrow}\calo_X^*\to 0\] on sees $T^*\G$ is in the image of the map $H^1(X,\Z/n\Z)\rightarrow H^1(X,\calo_X^*)$. The simple connectedness of $X$ yields~$T^*\G=0$ and  trivial tangent bundle $T\G=0$. Therefore $\G$ can be  induced by a holomorphic vector field with isolated zeroes.
\end{proof}

The foliation $\calfp^{\prime}$ of $M^{\prime}$ is hence induced by a holomorphic vector field $\mathcal{X}$ with isolated zeroes.
Let $U\subset M^{\prime}$ be the isomorphic image of $X\setminus(D\cup\sing(\F))$ under $q\circ\Sigma^{-1}$.
Following the argument of Brunella in \cite[pp. $77-78$]{BrBG}, one sees that with an additional birational map $M^{\prime} \dasharrow \Pu\times \Pu$ which induces an isomorphism between $U$ and its image $V$, one transforms the pair $(M^{\prime},\mathcal{X})$ in $(\Pu\times \Pu, \mathcal{X}_1\oplus \mathcal{X}_2)$, where  the vector fields $\mathcal{X}_j$ are holomorphic vector fields on $\Pu$.

More precisely, because of its rigidity, every $(-1)$-curve in $M'$ must be invariant by the holomorphic vector field $\mathcal{X}$. Consequently, contracting such a curve we obtain a surface on which $\mathcal{X}$ induces a still holomorphic vector field. Performing a maximal sequence of such contractions, one reaches a pair $(S,\mathcal{X}_S)$ where $\mathcal{X}_S$ is a holomorphic vector field on the surface $S$. By construction $S$ is a minimal rational surface, it is hence $\Pd$ or a Hirzebruch surface $\mathbb{F}_n$. The subset $U\subset M^{\prime}$ does not contain invariant rational curves so that it is mapped isomorphically on its image $V_S$ in~$S$. If $S=\Pd$, the blowing-up of  any singularity of the foliation then transforms  $\Pd$ in the first Hirzebruch surface, where the pull-back of 
$\mathcal{X}_S$ is still holomorphic. The blowing-up is an isomorphism over $V_S$, since $V_S$ contains no singularity.
In this way, we obtain a birational map $\varphi:M^{\prime} \dasharrow \mathbb{F}_n$ which maps $U$ isomorphically to its image and such that $\calR=\varphi_*\F$ is generated by a global holomorphic vector field $\mathcal{Y}$  on $\mathbb{F}_n$.

If $n=0$, we have the announced map. Suppose $n>0$. The negative section $\Gamma$ is invariant by the vector field (again, due to its rigidity). We cannot have $\mathcal{Y}_{\vert \Gamma}=0$, for in this case  the fibers of the rational fibration $p$ on $\mathbb{F}_n$ would all be $\calR$-invariant (see \ref{alter fibr/ricc}) and  $\F$ would be a  pencil of rational curves. Hence, $\calR$ is a Riccati foliation with respect to $p$. One must have an invariant fiber $F$ for $\calR$: otherwise, the absence of monodromy forces $\calR$ to be a pencil of rational curves. Hence $U$ is mapped  by $\varphi$ to a simply connected Zariski open subset $V_n$ in \[C_n:=\mathbb{F}_n\setminus(\Gamma\cup F).\]
Under a suitable isomorphism $\C^2\simeq C_n$, the restriction  of the fibration $\mathbb{F}_n\to \Pu$ to $C_n$ corresponds to the first projection of $\C^2$.
Any fiber $\tilde{F}$ of $\mathbb{F}_n\to \Pu$ which is distinct from $F$ must intersect $V_n$; otherwise we would have a surjective map $0=\pi_1(V_n)\to\pi_1(C_n\setminus \tilde{F})=\mathbb{Z}$.  Hence  $F$ is the unique $\calR$-invariant fiber for this projection, since $\calR$ has no invariant rational curve in $V_n$.

By Remark~$\ref{second sing}$ (Appendix), we have a sequence of elementary transformations (see \ref{elem}) centered at singularities of the foliation that transforms $\mathbb{F}_n$ in $\Pu\times\Pu$ and preserves the holomorphicity of the vector field.
As the centers of the elementary transformations are singularities of the foliation,  $V_n$ is mapped isomorphically to its image $V\subset \Pu\times\Pu$.  The resulting vector field on $\Pu\times \Pu$ is holomorphic and hence has the form $\mathcal{X}_1\oplus \mathcal{X}_2$, where both terms are holomorphic vector fields on $\Pu$.

 The zeroes of $\mathcal{X}_j$ ($j=1,2$) give $\calR$-invariant fibers of the $j$-th projection. In particular, as the simply connected Zariski open subset $V$ cannot embed in the complement of two distinct such fibers, the vector fields $\mathcal{X}_j$ both have a unique zero. In  suitable affine charts $z_j$, they take the form
 $\mathcal{X}_j=\partial_{z_j}$. This is a contradiction, since the vector field $\partial_{z_1}+\partial_{z_2}$  leaves invariant  the rational curves given by the levels of $z_1-z_2$. 

\section{Proof  of Theorem~$B$-$(ii)$   }
Let $\F$ be a foliation that satisfies the hypotheses of Theorem~\ref{kodsimples}-($\ref{kodsimples iii}$)  and has Kodaira dimension $\kappa (\calf) = 1$.
We prove here that $\F$  must be a Ricatti foliation.
The other implication of Theorem \ref{kodsimples}-($\ref{kodsimples iii}$) follows from Theorem\ref{kodsimples}-($\ref{kodsimples i}$) and the fact that Riccati foliations have Kodaira dimension at most $1$.

The foliation $\calf_{|\C^2}$ has no invariant algebraic curve, therefore is not birationally conjugate to a fibration. According to the birational classification  of foliations \cite[Thm. $9.1$]{BrBG}, $\calf$ is either a Riccati foliation or a \emph{turbulent 
foliation}; recall that the definition of turbulent foliation is obtained from the one of Riccati foliation by replacing ``rational fibration'' with ``elliptic fibration'' (see page \pageref{riccati}).

Therefore, proving  Theorem~$\ref{kodsimples}-(\ref{kodsimples iii})$ amounts to excluding the turbulent case.

This will be done in Proposition~$\ref{exclusionT}$, using the notion of a \emph{transversely affine} foliation. 
Consider a foliation $\calf$ on a surface $X$  given by  $\omega=0$ where $\omega$ is a rational $1$-form on $X$. 
We say that $\calF$ is \emph{transversely affine} if there exists a \emph{closed} rational $1$-form $\eta$ such that \[d\omega=\omega \wedge \eta.\]

\begin{rem}
If $\tilde{\omega}= g\, \omega$ is another $1$-form defining $\calf$, then $\tilde{\eta}:=\eta-dg/g$  is closed and satisfies $d\tilde{\omega}=\tilde{\omega }\wedge \tilde{\eta}$,
so that this definition is independent of the defining $1$-form $\omega$.
\end{rem}
The following has already been noticed in \cite[Prop. $22$]{LibroJorge}, we give a slightly different proof.
\begin{pro}\label{propturb}
Every turbulent foliation is a transversely affine foliation.
\end{pro}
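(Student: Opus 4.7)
The plan is to show that the holonomy of a turbulent foliation acts by affine transformations on a natural transversal, and to translate this into the existence of a closed rational $1$-form $\eta$ satisfying $d\omega=\omega\wedge \eta$.

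First, by definition of turbulent foliation, there is an elliptic fibration $\pi\colon X\to B$ to which $\F$ is everywhere transverse over the general fiber. Choose a rational $1$-form $\omega$ defining $\F$. By transversality, for a general elliptic fiber $F$, the restriction $\omega|_F$ is a nowhere-vanishing holomorphic $1$-form on $F$. Since $H^0(F,\Omega^1_F)$ is one-dimensional, spanned by the translation-invariant form $\nu_F$, we have $\omega|_F = c(F)\cdot \nu_F$ for some rational function $c$ on $B$. Replacing $\omega$ by $\omega/\pi^*c$, which defines the same foliation, I may assume $\omega|_F = \nu_F$ on every general fiber, so that $\omega$ is ``fiberwise translation-invariant''.

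Next I would argue that $\F$ inherits a transverse affine structure from the universal cover $\C$ of the generic fiber. The universal cover of $F$ is $\C$ equipped with the standard form $dz$, whose pushforward to $F$ is $\nu_F$. The automorphism group $\Aut(F)$ is a semidirect product of a finite cyclic group (acting near the origin by $z\mapsto \zeta z$ for a root of unity $\zeta$) by $F$ itself acting by translation, so every automorphism of $F$ lifts to an affine transformation $z\mapsto \alpha z+\beta$ of $\C$. Since the holonomy of $\F$ around loops in $B$ acts on a general fiber (viewed as a transversal to $\F$) by automorphisms of the elliptic curve, it acts on the universal cover by affine maps. Consequently, $\F$ carries a transverse affine structure: local first integrals of $\F$ are well-defined modulo the action of the affine group.

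To finish, I would convert this affine structure into the pair $(\omega,\eta)$: locally write $\omega = g\,dz$ for a local first integral $z$ of $\F$ and local rational $g$, and set $\eta:=-dg/g$. Then $d\omega = dg\wedge dz = \omega\wedge \eta$ and $d\eta=0$ hold on the chart. Under a transition $z'=\alpha z+\beta$ with $\alpha,\beta$ constants, one has $g' = g/\alpha$ and $dg'/g' = dg/g$, so $\eta$ is globally well-defined and closed. The main obstacle will be verifying that $\eta$ is actually a rational form on all of $X$, rather than a form defined only on the open locus where $\F$ is smoothly transverse to $\pi$: this requires controlling $\eta$ across the singular fibers of $\pi$ and across the discriminant of $\F$, and depends on the algebraicity of $\pi$ together with the fact that the affine transition cocycle $\{(\alpha_{ij},\beta_{ij})\}$ is induced from the algebraic group $\Aut(F)$, so the associated $\mathbb{C}^*$-bundle whose logarithmic derivative is $\eta$ is algebraic.
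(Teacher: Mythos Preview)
Your core insight---that the holonomy on a general elliptic fiber lifts to affine transformations of its universal cover $\C$, yielding a transverse affine structure---is exactly the paper's argument. Two steps in your write-up are problematic, however.

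The first paragraph's claim that $\omega|_F$ is nowhere-vanishing holomorphic is wrong: a rational $1$-form $\omega$ defining $\F$ has a polar divisor that typically meets every fiber, so $\omega|_F$ is only meromorphic and there is no reason for $\omega|_F=c(F)\,\nu_F$. Fortunately this normalization is not needed; your later construction $\eta=-dg/g$ from $\omega=g\,dz$ works for any rational $\omega$, provided the local first integrals $z$ come from the universal covers of fibers so that the transitions are affine. The paper sets this up slightly differently: rather than starting from a given $\omega$ and local charts, it fixes one fiber $F=\C/\Lambda$, builds the multivalued developing map $f:\widetilde{X^*}\to\C$, and \emph{constructs} $\omega=df/g$ with $g=df(v)$ for a fixed rational vector field $v$; this makes both $\omega$ and $\eta=-dg/g$ uniform meromorphic objects on $X^*$ rather than data pieced together from charts.

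The substantive gap is the one you yourself flag: extending $\eta$ rationally across the bad fibers. Your appeal to ``the associated $\C^*$-bundle is algebraic'' is not a proof; nothing a priori prevents $\eta$ from acquiring an essential singularity along a bad fiber (think of $e^{1/z}$ on $\C\setminus\{0\}$). The paper handles this in two moves. First it invokes stable reduction to pass to a birational model where the adapted fibration is a genuine elliptic fiber \emph{bundle}---harmless, since transverse affine structures transport birationally. Second, near any $\F$-invariant fiber one then has a product chart $\D\times F$ with coordinates $(z,w)\in\D\times\C$ in which $\F$ reads $dw=dz/A(z)$ for some holomorphic $A$; the developing map becomes $f(z,w)=w-\int_b^z ds/A(s)$, so $df=dw-dz/A(z)$ is visibly meromorphic at $z=0$, whence so are $g=df(v)$ and $\eta=-dg/g$. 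Without first reducing to a bundle you have no such product chart, and the explicit extension argument does not go through.
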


\begin{proof}
As transversely affine structures may be transported by birational transformations, stable reduction \cite[Prop. $4.6$]{BrBG} and \cite[Thm. $2.21$ p. $37$]{CLNL}
reduce the proof to the case where the foliation $\calf$ is transverse to the general fiber of  an elliptic fiber bundle $\pi : X \rightarrow C$.

Let $X^*$ be the complement of the set of invariant fibers, and $F=\pi^{-1}(b)\subset X^*$ a fiber. Using the foliation to  identify 
nearby fibers, we obtain a multiform submersion  $\widetilde{X^*}\rightarrow F$ which defines the foliation; it lifts to a submersion
$f :\widetilde{X^*}\rightarrow \C$ to the universal cover of $F\simeq \C/\Lambda$. By construction the monodromy group of $f$ fixes the 
lattice $\Lambda$ and must lie in the group $\mathrm{Aff}(\C)$ of affine morphisms. Hence the monodromy of $df$ is linear (contained in $\C^*$). In particular, if $v$ is a rational
vector field on $X$ which is not tangent to $\calf$, the meromorphic function \[g= df(v):\widetilde{X^*}\rightarrow \C\] has the same monodromy as $df$ and  
\[\omega:=\frac{df}{g}\]
is a well defined meromorphic $1$-form on $X^*$, tangent to $\calf$. We have \[d\omega=-\frac{df \wedge dg}{g^2}  =\omega\wedge\eta,\]
with $\eta=  - \frac{dg}{g}$ a well defined closed meromorphic one form on $X^*$.

 It remains to show that the pair $(\omega,\eta)$ extends meromorphically in the neighborhood of any $\calf$-invariant fiber of $\pi$. 
 Let $U\simeq \D\times F$ be such a neighborhood, $\D$ a disc. Let $(z,w) \in \mathbb D\times \C$ represent the elements of $U$, $z=0$ corresponding 
 to the invariant fiber. We have a local equation of the form \[dw = \frac{dz}{A(z)},\] for $\calf$, with $A(z)$ holomorphic in $\D$.
Let $b$ be a point in $\D$. If the coordinate $w$ is well chosen, in $\D^*\times F$, the submersion $f$ expresses as 
\[f(z,w)=w-\int_b^z\frac{ds}{A(s)}\]
and \[df =  dw-\frac{dz}{A(z)}\]
is meromorphic at $z=0$, and so is $g$; we have the required extension property.
\end{proof}

\begin{pro}\label{exclusionT}
Let $\calf$ be a foliation on $\P^2\simeq \C^2 \cup L_\infty$.
If $\calf$  is a turbulent foliation with $\kappa(\calf) =1$, then it possesses an invariant algebraic curve outside $L_\infty$.

\end{pro}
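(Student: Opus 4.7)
The plan is to argue by contradiction: suppose $\calf$ is turbulent of Kodaira dimension $1$ on $\P^2$ and has no $\calf$-invariant algebraic curve outside $L_\infty$. The strategy is to use Proposition~\ref{propturb} to equip $\calf$ with a transversely affine structure, extract from it an entire first integral on $\C^2$, and then use the adapted elliptic fibration to force this integral to have algebraic level sets, producing a $1$-parameter family of invariant algebraic curves in $\C^2$---a contradiction.

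First I would invoke Proposition~\ref{propturb} to get rational $1$-forms $\omega$ (defining $\calf$) and $\eta$ (closed) on $\P^2$ with $d\omega=\omega\wedge\eta$. A direct local analysis shows that $\eta$ cannot have a pole along any non-$\calf$-invariant curve: near a smooth point of $C=\{y=0\}$ of such a curve, writing $\omega=A\,dx+B\,dy$ with $A(x,0)\not\equiv 0$, the leading pole of $\omega\wedge\eta$ on $C$ would be uncancelled, while $d\omega$ is holomorphic. Hence the polar divisor of $\eta$ lies in the (at most unique) $\calf$-invariant curve $L_\infty$. The restriction $\eta|_{\C^2}$ is then a closed algebraic $1$-form with polynomial coefficients, so $\eta|_{\C^2}=dH$ for some $H\in\C[x,y]$, and therefore $\eta=dH$ on all of $\P^2$. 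Plugging into the structure equation gives $d(e^H\omega)=e^H(d\omega+dH\wedge\omega)=0$, so $e^H\omega$ is a closed entire $1$-form on the simply connected $\C^2$, which can be integrated: $e^H\omega=dF$ for an entire $F\colon\C^2\to\C$. This $F$ is a first integral of $\calf|_{\C^2}$ whose level sets are the leaves.

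To finish, I would exploit turbulence. After a birational modification $\calf$ is transverse to the general fiber of an elliptic fibration, which descends to a rational map $\Phi\colon\P^2\tor C$; simple connectedness of $\P^2$ forces $C=\P^1$, and the generic fiber $\bar{E}_t\subset\P^2$ is a smooth elliptic curve transverse to $\calf$. The restriction $F|_{\bar{E}_t\cap\C^2}$ is therefore a nonconstant holomorphic function on a punctured elliptic curve. The hard part will be to show that, for generic $t$, this restriction extends meromorphically to $\bar{E}_t$ with some bounded degree $d$ (ruling out essential singularities at the base points $\bar{E}_t\cap L_\infty$ of the pencil). Once this is established, each leaf $\{F=c\}$ meets $\bar{E}_t$ in exactly $d$ points, so the leaves are algebraic degree-$d$ multi-sections of $\Phi$, providing a $1$-parameter family of $\calf$-invariant algebraic curves in $\C^2$ and contradicting the hypothesis. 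The meromorphic extension step is the main technical obstacle; I would approach it through a local asymptotic analysis of $e^H\omega$ at the finitely many base points of $\Phi$ on $L_\infty$, using the normal forms of $\calf$ at its singularities there together with the polynomial control on $\omega$ and $H$.
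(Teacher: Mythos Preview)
Your route diverges substantially from the paper's. The paper does not attempt to build a first integral; instead it invokes \cite[Corollary~B]{CP} (a structure theorem for transversely affine foliations without affine invariant curves) to realise $\calf$ as a polynomial pullback $H_0^*\calg$ of a Riccati foliation $\calg$, shows $\kappa(\calg)=1$ via Theorem~\ref{kodsimples}--$\ref{kodsimples i}),\ref{kodsimples ii})$, and then uses uniqueness of the Iitaka fibration (Lemmas~\ref{lemIitaka1}, \ref{lemIitaka2}) to prove that the resolved morphism sends elliptic fibers to rational fibers. A Riemann--Hurwitz count on a generic fiber then yields the contradiction: an elliptic curve cannot map to $\Pu$ ramified over at most one point.

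Your construction of the entire first integral $F=\int e^H\omega$ on $\C^2$ is correct (the argument that the polar locus of $\eta$ is $\calf$-invariant is fine), but the proof stalls precisely at the step you yourself flag as ``the main technical obstacle'', and this gap is genuine. Since $\eta\neq 0$ (otherwise $\omega$ would be a closed polynomial $1$-form and $\calf$ would have a polynomial first integral, hence invariant affine curves), $H$ is a non-constant polynomial, so $e^H$---and therefore $F$---has essential singularities along $L_\infty$. Restricting such a function to a curve meeting $L_\infty$ transversally typically produces an essential singularity on the curve; nothing in the turbulent structure forces a cancellation, because the elliptic fibers are transverse to $\calf$ and bear no special relation to the level sets of $H$. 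Your proposed ``local asymptotic analysis'' is not a proof, and I do not see how to carry it out without importing a structural input of strength comparable to \cite{CP}. Even granting the fiberwise meromorphic extension, a further argument is needed: knowing that $\{F=c\}$ meets every $\bar E_t$ in at most $d$ points does not by itself force $\{F=c\}$ to be algebraic; you would effectively need $F$ to extend meromorphically to the total space of the fibration (hence to be rational), which is strictly stronger than the fiberwise claim and again runs into the essential singularities of $F$ along $L_\infty$.
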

\begin{proof}
By contradiction, suppose $\calf$ possesses no invariant algebraic curve in $\C^2=\P^2\setminus L_{\infty}$.

By Proposition \ref{propturb}, $\calf$ is transversely affine.
The assumption on non existence of invariant algebraic curves allows to use \cite[Corollary B]{CP} to infer that $\calf$ is given  by the pullback $\omega$ of a $1$-form 
$$\omega_0 = dy + (a(x) + b(x)\, y) \, dx, \quad a,b\in\C[x]$$ 
under a polynomial map $\C^2\rightarrow \C^2$, which 
extends as a rational map $H_0: \Pd\dasharrow \Pd$.

Denote $\calg$ the foliation of $\P^2$ induced by $\omega_0$.

 There exist sequences of blowing-ups $\Sigma_X:X\rightarrow \P^2$, $\Sigma_Y:Y\rightarrow \P^2$ in the source and the target of $H_0$ such that the following conditions are met.
\begin{itemize}
\item The foliations $\overline{\calf}:=\Sigma_X^{*}\calf$ and $\calR:=\Sigma_Y^{*}\calg$ have at most reduced singularities.
\item There exists an elliptic fibration $f_X: X\rightarrow \P^1$ adapted to the turbulent foliation $\overline{\calf}$. 
\item There exists a rational fibration $f_Y: Y\rightarrow \P^1$ adapted to the Riccati foliation $\calR$. 
\item The rational map $H:X\dasharrow Y$ such that $\Sigma_Y\circ H=H_0\circ \Sigma_X$ is actually a morphism (\textit{i.e.} holomorphic).
\end{itemize}

As $\calf|_{\C^2}$, $\calg|_{\C^2}$ possesses no algebraic invariant curves. 
Hence, by  the already proved item   ($\ref{kodsimples i}$)   of Theorem \ref{kodsimples},   we must have $\kappa(\calR)=\kappa(\calg)\geq 1$.
As for every Riccati foliation, we have  $\kappa(\calR)\leq 1$.  Consequently
\[\kappa(\calR)=1.\]

By Lemma $\ref{lemIitaka1}$ below, $f_{Y}$ is the Iitaka fibration (\cite[p. $107$]{BrBG})   of the cotangent  divisor  $T^*\calR$.  
Similarly $f_{X}$ is the Iitaka fibration of the  cotangent  divisor $T^*{\overline{\calf}}$.

From the remark in \cite[p. $20$]{BrBG} it follows that $T^*{\overline{\calf}}=H^*(T^*\calR)\otimes \calO_X(D)$ for  an effective divisor $D$  on $X$ 
(see also the proof of \cite[Lemme $3.2.8$]{Tou}) 

Then, Lemma \ref{lemIitaka2} below yields that  $H$ maps the fibers of $f_X$ in the fibers of $f_Y$: for general $c\in \P^1$, there exists
$r(c)\in \P^1$ such that $H\left( f_X^{-1}(c)\right)\subset f_{Y}^{-1}(r(c))$.

Consider, for general $c$, the following restriction of $H$, \[H_c :f_X^{-1}(c)\rightarrow f_{Y}^{-1}(r(c)).\] 
Denote $R\subset X$ and $B\subset Y$ the ramification and branching curves of $H$, namely \[R:=\{x\in X;  \rk(d_xH)\neq2\},\quad  B:= H(R).\]
The map $H_c$ is  \'etale outside $R$. 

Let $\overline{L}_{\infty}$ be the strict transform of ${L}_{\infty}$ in the sequence of blowing-ups $\Sigma_Y$.
 If $B$ has $\calR$-invariant components, they must be contained in $\overline{L}_{\infty}$ or in the exceptional divisor of $\Sigma_Y$,
 because $\calg$ possesses no invariant curve in $\C^2$. Denote $B_{inv}$ the union of these components. 
 Notice that the general fiber of $f_Y$ intersects $B_{inv}$ at most once, because $f_Y$ is induced by the coordinate  fibration $x$ on $\C^2$.

We assert  (\textbf{A}): \emph{for a general $c$, the curve $f_{Y}^{-1}(r(c))$ does  not  intersect $B\setminus B_{inv}$}. 
 
From (\textbf{A}), we obtain that, for general $c$, the map $H_c$ ramifies at most over  
  one point of $f_{Y}^{-1}(r(c))\simeq\P^1$, contradicting that $f_X^{-1}(c)$ is elliptic.

We conclude by proving Assertion (\textbf{A}).  First,  remark that any non $\calR$-invariant component of $B$  is a curve transverse to the general fiber of $f_Y$ and has a 
 finite number of tangencies with $\calR$.  Therefore   we have, for general $c$:
 
 \begin{enumerate}
\item \label{Cond1} For any point $p' \in \big(f_{Y}^{-1}(r(c))\setminus B_{inv}\big)\cap B$, $B$ is transverse to both $f_{Y}^{-1}(r(c))$ and $\calR$ at $p'$;
\item  \label{Cond2} for any point $p\in  f_X^{-1}(c) \cap R$, $H$ writes as  $(s,t)\mapsto (S,T)=(s^{\ell},t)$,  with  $\ell>1$,  in suitable 
local coordinates $(s,t)$ and $(S,T)$ centered at $p$ and $p'$ respectively.
\end{enumerate} 
 Take $c$ such that we have $(\ref{Cond1})$ and $(\ref{Cond2})$. Suppose we have a point $p'=H(p)$ in $f_{Y}^{-1}(r(c))\cap B\setminus B_{inv}$.
In the adapted coordinates $(S,T)$ of $(\ref{Cond2})$, $S=0$ is a local equation for $B$. By $(\ref{Cond1})$, $\calR$ is transverse to $B$ at $p'$.
Hence we have a local graph $T=\lambda_1 S +o(S), \lambda_1\in \C$, which is tangent to $\calR$. Similarly, the fiber $f_{Y}^{-1}(r(c))$ passing through $p'$ expresses locally as $T=\lambda_2 S +o(S), \lambda_2\in \C$.
In the neighborhood of $p$, the pulled-back graphs have equations $t=\lambda_i s^{\ell}+o(s^{\ell})$ and are tangent at $(s,t)=(0,0)$ because $\ell>1$. One is tangent to $\calfp$ and the other is tangent to $f_X^{-1}(c)$. This implies that $p$ is a tangency point between $\calfp$ and $f_X^{-1}(c)$.
As $f_X$ is  an adapted fibration for $\calfp$, such an intersection point $p'$ cannot exist for $c$ general enough.
 \end{proof}
For the reader's convenience, we prove two facts that belong to  the  birational theory of foliations and varieties. 

\begin{lem}\label{lemIitaka1}

Let $\calfp$ be a reduced foliation on a projective manifold $X$, with $\kappa(\calfp)=1$.
Suppose $\calfp$ is a Riccati or a turbulent foliation, with adapted fibration $f : X\rightarrow C$.
Then $f$ is the Iitaka fibration of $T^*\calfp$.
\begin{proof}
Let $F$ be the general fiber for a fibration $f$ adapted to $\calfp$. Lemma \ref{lemIitaka2} shows that the Iitaka 
fibration associated to $F$ is the fibration $f$. The proof of \cite[Thm. $9.1$]{BrBG} shows $T^*\calfp^{\otimes m}=\mathcal O(nF+D)$ for an 
effective divisor $D$ and suitable integers $m,n>0$.
Lemma \ref{lemIitaka2} (with $\mathcal{L}=id_X$)  allows to deduce that both divisors $F$ and $T^*\calfp$ have the same Iitaka fibration, yielding the conclusion.
\end{proof}
\end{lem}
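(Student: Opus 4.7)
The plan is to identify $f$ with the Iitaka fibration of the cotangent bundle by sandwiching $T^{*}\calf$ between a fiber class and an effective correction, and then to invoke Lemma~\ref{lemIitaka2} as a bridge. The hypothesis $\kappa(\calf)=1$ together with the Riccati/turbulent character of $\calf$ means that the geometry of the adapted fibration $f:X\to C$ is already adequate to control sections of $T^{*}\calf$; what is left is a clean comparison of Iitaka data.

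First, I would exploit the structural classification recalled in \cite[Thm.~$1$ p.~$118$]{BrBG}. For a reduced Riccati or turbulent foliation with $\kappa(\calf)=1$, this classification yields positive integers $m,n>0$ and an effective divisor $D$ on $X$ such that
\[ (T^{*}\calf)^{\otimes m} \simeq \mathcal{O}_X(nF+D), \]
where $F$ is a general fiber of $f$. This is the quantitative input: high tensor powers of $T^{*}\calf$ are controlled by, and in turn control, sections of $\mathcal{O}_X(nF)$, so that the two divisors should share the same Iitaka fibration.

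Next, I would apply Lemma~\ref{lemIitaka2} to the divisor $F$ itself. Because distinct general fibers of $f$ are linearly equivalent but set-theoretically disjoint, the rational maps associated to the linear systems $|kF|$ eventually factor through $f$, so the Iitaka fibration attached to $F$ is precisely $f:X\to C$. This is the content of the specialisation of Lemma~\ref{lemIitaka2} where $F$ is already a divisor moving in a pencil of disjoint members.

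Finally, to transport this identification from $\mathcal{O}_X(F)$ to $T^{*}\calf$, I would invoke Lemma~\ref{lemIitaka2} with $\mathcal{L}=\mathrm{id}_X$, applied to the isomorphism displayed above: the effectivity of $D$ forces the Iitaka fibrations of $(T^{*}\calf)^{\otimes m}$ and of $\mathcal{O}_X(nF)$ to coincide, and since Iitaka fibration is insensitive to tensor powers, we obtain that $f$ is the Iitaka fibration of $T^{*}\calf$. The main obstacle I anticipate is not conceptual but bookkeeping, namely extracting from \cite[Thm.~$1$]{BrBG} the decomposition in exactly the form demanded by Lemma~\ref{lemIitaka2}, with the fiber appearing with strictly positive multiplicity and the remainder effective; once this is in place the identification of the two fibrations is essentially formal.
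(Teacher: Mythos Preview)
Your proposal is correct and follows essentially the same route as the paper: extract the decomposition $(T^{*}\calf)^{\otimes m}\simeq\mathcal{O}_X(nF+D)$ from \cite[Thm.~$1$ p.~$118$]{BrBG}, identify the Iitaka fibration of $F$ with $f$ via Lemma~\ref{lemIitaka2}, and then apply Lemma~\ref{lemIitaka2} again with $\mathcal{L}=\mathrm{id}_X$ to transfer this to $T^{*}\calf$. The only cosmetic difference is the order of the first two steps.
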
 

In our context, the next  lemma should  be applied  in the case of (foliated)  Kodaira dimension~$1$.

\begin{lem}\label{lemIitaka2}
Let $\mathcal{L} :X_1\rightarrow X_2$ be  a morphism between projective manifolds.
Let $D_1,D_2$ be divisors on $X_1$ and $X_2$, respectively. Suppose these divisors have equal positive  Iitaka dimension. 
Take $k>0$ big enough so that $p_i : X_i\dasharrow \mathbb{P}\Gamma(X_i,\mathcal O(D_i)^{\otimes k})^*$ is the Iitaka fibration of $D_i$, $i=1,2$.
Suppose $D_1=\mathcal{L}^*D_2+D$ with $D$ effective. Let $ r \circ q$ be the Stein factorization of  $p_2 \circ \mathcal{L}$. Then $q$ is the Iitaka fibration of $D_1$.
\end{lem}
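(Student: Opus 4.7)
The plan is to factor $p_2\circ\mathcal{L}$ through $p_1$ via an explicit inclusion of section spaces, and then to recover $q$ from $p_1$ using the uniqueness of Stein factorization.

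To start, let $s_D\in H^0(X_1,\mathcal{O}(D))$ denote a defining section of the effective divisor $D$. For each $k>0$ the rule
$$s\ \longmapsto\ s_D^k\cdot\mathcal{L}^*s$$
gives a linear map $\iota_k\colon H^0(X_2,\mathcal{O}(kD_2))\to H^0(X_1,\mathcal{O}(kD_1))$. One verifies that $\iota_k$ is injective using dominance of $\mathcal{L}$, a property forced by the hypothesis $\kappa(D_1)=\kappa(D_2)>0$ together with the decomposition $D_1=\mathcal{L}^*D_2+D$ (and which holds tautologically in the intended application). Dualizing yields a linear projection
$$\pi\colon \mathbb{P}H^0(X_1,\mathcal{O}(kD_1))^*\dashrightarrow \mathbb{P}H^0(X_2,\mathcal{O}(kD_2))^*,$$
and one checks by construction that $p_2\circ\mathcal{L}=\pi\circ p_1$ as rational maps on $X_1$.

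Next, set $Y_i:=p_i(X_i)$, so $\dim Y_i=\kappa(D_i)=\kappa(D_1)$. Since $\mathcal{L}$ is dominant, $\pi(Y_1)=p_2(\mathcal{L}(X_1))$ is dense in $Y_2$, whence the restriction $\pi|_{Y_1}\colon Y_1\dashrightarrow Y_2$ is a dominant rational map between varieties of equal dimension, and therefore generically finite. Taking its Stein factorization $\pi|_{Y_1}=r'\circ q'$, the map $q'$ has connected fibers between equi-dimensional varieties and is consequently birational, while $r'$ is finite.

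Finally, taking $k$ divisible enough that $p_1$ itself has connected general fibers (a standard feature of the Iitaka fibration for $k$ sufficiently divisible), the composition $q'\circ p_1$ has connected fibers, and
$$p_2\circ\mathcal{L}\ =\ \pi\circ p_1\ =\ r'\circ(q'\circ p_1)$$
realizes a Stein factorization of $p_2\circ\mathcal{L}$. Uniqueness of Stein factorization up to birational equivalence then identifies $q$ with $q'\circ p_1$, and since $q'$ is birational this yields a birational equivalence $q\sim p_1$, i.e., $q$ is the Iitaka fibration of $D_1$. The main technical hurdle will be the careful handling of dominance and connected-fiber issues across this chain of rational maps: extracting dominance of $\mathcal{L}$ from the Iitaka-dimension hypothesis, and ensuring that $k$ is chosen large and divisible enough so that both $p_1$ and $p_2$ are genuine Iitaka fibrations with connected general fibers.
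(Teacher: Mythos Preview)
Your proposal is correct and follows essentially the same route as the paper: both arguments build the injection $H^0(X_2,\mathcal{O}(kD_2))\hookrightarrow H^0(X_1,\mathcal{O}(kD_1))$ via multiplication by $s_D^k$ and pull-back, dualize to get $p_2\circ\mathcal{L}=\pi\circ p_1$, use equality of Iitaka dimensions to see that the induced map between the images is generically finite, and then invoke uniqueness of Stein factorization to identify $q$ with $p_1$ up to a birational map on the target. Your write-up is in fact a bit more careful than the paper's in flagging the dominance of $\mathcal{L}$ and the connected-fiber requirement for $p_1$, which the paper leaves implicit.
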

\begin{proof}
Choosing a nontrivial global section  $s\in \Gamma(X_1,\mathcal O(D))$ we have an injection 
\[\phi_k : \Gamma(X_2,\mathcal O(D_2)^{\otimes k}) \rightarrow \Gamma(X_1,\mathcal O(D_1)^{\otimes k})\]
\[\sigma \mapsto (\mathcal{L}^*\sigma) \otimes s^{\otimes k}\]
and the following  diagram commutes, with $\phi_k^*$ onto.

\[\xymatrix{
   X_1 \ar[d]_{\mathcal{L}} \ar@{-->}[r]^-{p_1}& \mathbb{P}\Gamma(X_1,\mathcal O(D_1)^{\otimes k})^* \ar[d]^{\phi_k^*} \\
   X_2 \ar@{-->}[r]^-{p_2} & \mathbb{P}\Gamma(X_2,\mathcal O(D_2)^{\otimes k})^*
 }\]
  
  Restricting the maps, with $S=p_1(X_1)$, $T=p_2(X_2)$, we get the following.
 \[\xymatrix{
    X_1 \ar[d]_{\mathcal{L}} \ar@{-->}[r]^-{p_1}& S \ar@{->>}[d]\\
    X_2 \ar@{-->}[r]^-{p_2}& T
  }\]
The map $S\rightarrow T$ is onto. As $\dim S=\dim T$, it must be a generically finite map. The uniqueness of the  Stein factorization
yields $q=g \circ p_1$ for some birational map $g : S\rightarrow S'$. This yields the conclusion, 
because the Iitaka fibration is defined only up to  birational transformations  in the target.
\end{proof}

To explain the limits of Proposition~$\ref{exclusionT}$, we present a turbulent  foliation with $\kappa (\calf)=1$ having 
exactly one rational invariant  curve in $\C^2$.  
\begin{exa} 
Consider the  degree $4$  foliation $\calf$ on $\P^2$ associated to the 1-form on $\C^2$ given by
\[\omega = d(y^2+x^3)   + (y^2+x^3)\cdot (3 y dx - 2 x dy).\]
The foliation  $\calf$ leaves invariant the cuspidal rational  cubic $C:\, y^2 + x^3  = 0$
and the line at infinity   $L = L_{\infty}$.  From \cite[Lemme IV.$2$]{Lo} 
it follows that these are the   unique  algebraic $\calf$-invariant curves. 
The  pencil of cubics $\mathcal{E}$  
generated  by   $C$
and the line  at infinity  $L$
(taken with  multiplicity  $3$) shall  give  rise to the adapted elliptic fibration for the turbulent foliation $\overline{\calf}$,  as obtained from $\calf$ after reduction of singularities.

To see this,   remark that
\[\omega \wedge d(y^2+x^3) = 6 (y^2+x^3)^2\, dx \wedge dy\]
which  means that the contact locus  of $\calf$ and $\mathcal{E}$ is exactly    $C \cup L$. A reduced model $\overline{\calf}$   and the adapted elliptic fibration  are obtained  after
$9$  blowing-ups at $L\cap C$ (and  infinitely near points), besides  $3$ extra
 blowing-ups at the cuspidal point of $C$
(and  infinitely near points).  The elliptic  fibration  has just two
 singular fibers: one of type $II^*$, in Kodaira's notation,  and  one which is a blown up fiber  
of type  $II$, as shown in  the next figure, where $E_9$ is a section. Except $E_9$ and $E_3$, all the components of the exceptional divisor are $\overline{\calf}$-invariant.
\[\includegraphics{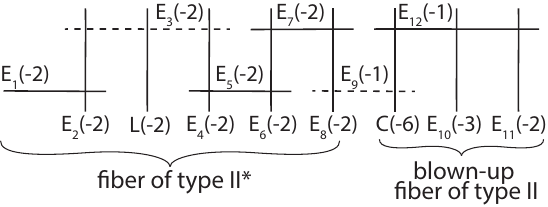}\] 
The possibilities for  turbulent  foliation are $\kappa (\calf) = 0, 1,-\infty$. The third option is trivially excluded, because it corresponds  to foliations given by rational fibrations.
We will show $\kappa (\calf)~\neq~0$ by contradiction.

Since  $\overline{\calf}$ is free of  foliated exceptional curves,  if  $\kappa(\calf )= 0$ then 
$T^{*} \overline{\calf} \equivq {\bf N}$. The
 support of the negative part  ${\bf N}$ is contained in the six 
  $ \overline{\calf}$-chains, which are given by:
i) $\overline{L}$, ii) $\overline{E}_1$, $\overline{E}_2$, iii) $\overline{E}_4,\overline{E}_5,\overline{E}_6,\overline{E}_7, \overline{E}_8$, 
iv) $\overline{C}$, v) $\overline{E}_{10}$ and  vi)
$\overline{E}_{11}$.

Now we assert  that there exists a rational fibration  whose generic fiber $F$ has  tangency with $\overline{\calf}$ along the curve $\overline{E}_3$. Based on this,  we obtain
\[ T^{*} \overline{\calf} \cdot F  = tang(\overline{\calf},F) - F \cdot F = tang(\overline{\calf},F)  > 0,\]
although $F$ does not intersect any $ \overline{\calf}$-chain, a  contradiction.

To justify the assertion,  consider the pencil of rational 
cuspidal  cubics  in the projective  plane  $\mathcal{C}_{\lambda}:\,   y^2   = \lambda x^3$.
There are  two base points in the plane at   $(1:0:0)$ and   $(0:1:0)$.  
Three   blowing-ups at  $(1:0:0)$  (and infinitely near points) and three blowing ups at $(0:1:0)$  (and infinitely near points)  are enough  to separate the cuspidal cubics  of $(\mathcal{C}_{\lambda})$  and produce a rational fibration. 
  The third  blowing-up at infinity  introduces the exceptional line $E_3$ (figure above). An explicit computation shows that, in local 
coordinates where $E_3:  (x=0)$ and  
the  rational fibration is associated to
the $1$-form $dy$, the transformed foliation of $\calf$  is associated to
\[ (6 x^4 y^4 + 6 x^4 y^3)\, dx + (2 x^5 y^3 + 3 x^5 y^2 + y + 1)\, dy   \]
So there is tangency along  $E_3$, as asserted.
\end{exa}

\section{Birational geometry of examples and proof of  Theorem~$B$-$(iii)$}\label{Exemplos}

In this section we give examples of foliations $\calf$ of $\P^2$ which are associated to simple derivations and whose Kodaira dimension satisfies $\kappa (\calf ) \in \{1,2\}$. 

In every case we describe the corresponding reduction of singularities and give a nef model. Moreover, we show some birational (non-)equivalences between examples.

We  provide diagrams  to illustrate the  reduction of singularities and nef models. The  following conventions are used  in  the diagrams.
\begin{itemize}
  \item The affine coordinates $(x,y)\in \C^2$ correspond to  $(x:y:1) = (x_0:x_1:x_2)\in \Pd$.
 \item We denote as $(\sigma_i)$ the blowing-ups of points composing a reduction
  of singularities of a given foliation $\calf$. The blowing-downs composing the morphism $M\to S$ to a nef model $\calf_{nef}$ will be denoted as $(q_j)$, when contracting $(-1)$-curves, and as   $(\rho_i)$ otherwise.

 \item In the figures, we use  $nd$, $sn$, $m$, $r$, $nil$  for \emph{non-degenerate}, \emph{saddle-node}, \emph{Morse}, \emph{radial}   and \emph{nilpotent}  singularities, 
 respectively (\textit{cf.} Section \ref{Terminology}).
 \item The line at infinity is   denoted $L$ and its  strict transforms denoted as $\overline{L}$, except in the figures (see the next point).
 \item In the figures, we use the same symbol for a curve and its strict  transforms under extra blowing-ups, but the self-intersection 
 numbers  indicated in parentheses $(n)$ will help to avoid confusions.
 \item  The bracket $[m]$  denotes  the  polar order of the fiber ($z=0$)  of the Riccati  foliation locally defined by $z^m dw + ( a(z) w^2 + b(z) w + c(z)      ) \, dz $, with $a,b,c$ holomorphic at $z=0$.
\end{itemize}

We   start   with      examples having   $\kappa (\calf) =1$.
Part of these examples are associated to \emph{Shamsuddin derivations}, see \cite{Sha}. These are derivations of the form \[\der=\partial_x+(a(x)y+b(x))\partial_y,~~~~a,b\in\C[x].\]
The associated foliation is given by  $\omega=dy-\left(a(x)y+b(x)\right)dx$ and is a special Riccati foliation. 
\begin{rem} For a Riccati foliation $\F$, the Kodaira dimension always satisfies $\kappa (\calf) \leq 1$. In view of our Theorem \ref{kodsimples}, if such a foliation is associated to a simple derivation of $\C[x,y]$,
it must satisfy $\kappa (\calf)=1$. However, for illustrative purposes we present direct computations of the Kodaira dimension for such examples. 
The fibrations adapted to the examples we propose correspond to pencils of parallel lines in $\C^2$. Notice however that, applying polynomial automorphisms of $\C^2$, we can transform these lines in curves of arbitrary degree, the transformed derivation remaining simple.
\end{rem}
\begin{exa}\label{shamquadratico}

Consider the foliation $\calf$  of the projective  plane   associated to 
 \[ \omega = (x y + 1 ) \,  dx - dy ,\]
 called   Bergman's  example  in \cite{CoS}.
The extended  foliation   $\calf$ of $\P^2$   has   degree $2$.
The  point at infinity  $(1:0:0)$ 
 is a saddle-node with Milnor number $\mu=3$
 whose strong separatrix is the line at infinity $L: x_2=0$; in particular $L$ is $\calf$-invariant. At $(0:1:0)$  there is a a quadratic singularity:
 the  blowing-up  at this point
 produces a  Riccati foliation on $\mathbb{F}_1$,  leaving invariant the exceptional curve $E_1$. There is just one singular point along $E_1$,
 a saddle-node with Milnor number $\mu=3$, strong separatrix $\overline{L}$  and weak separatrix $E_1$. 
 This already gives a nef model in for $\calf$ in $\mathbb{F}_1$.  Taking into account the multiplicity of $\overline{L}$ (thought of) as an invariant fiber, and the formula in \cite[p. 48]{BrBG} for the cotangent  bundle we deduce
 \[  T^*{\calf_{nef}}  =  \calo(-2 \overline{L}) \otimes \calo(3\overline{L}) =  \calo(\overline{L})\quad\mbox{and}\quad
 \kappa(\calf) = 1.    \]
 \[\includegraphics{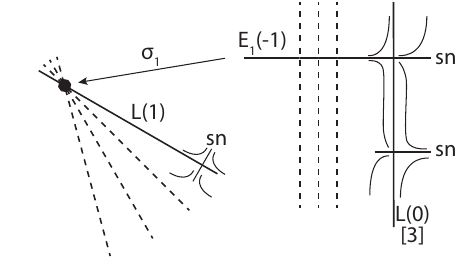}\] 
 \end{exa}

\begin{exa}\label{GL}

From \cite{GLl} we consider the foliation of degree $2$ in the projective plane associated to 
\[ \omega =  (  1 + x \, (2 x + y)    )  dy + 2 x ( 2 x + y   )  dx = 0    \]
The singularities  along $L$ are a saddle-node at  $(0:1:0)$
and a  non-reduced (quadratic) singularity  at $( -1 :  2  : 0)$.
Any affine line $2 x + y = c $, $c\in \C$, is completely  transverse  to the foliation.  One blowing-up at 
$( -1 :  2  : 0)$  is enough to  reduce the singularity and produces a Riccati foliation.
The exceptional line is invariant  and  has a  saddle-node. 
$\overline{L}$  is  the unique  invariant  fiber.  
This is already a nef model.   The multiplicity  of $\overline{L}$ as an invariant fiber  is  $3$ and we obtain  
\[T^*\calf_{nef}  =  \calo(-2 \overline{L}) \otimes \calo(3\overline{L}) =  \calo(\overline{L})\quad  \mbox{and}\quad  \kappa(\calf) = 1\]

\end{exa}

\begin{exa}\label{shamgrau3}
 
We consider the foliation  of degree $3$ in the projective plane  of Shamsuddin  type given by (see \cite[Exa. 13.3.5]{NoT})
\[ \omega =  ( y  x^2 +  x y + x^2 )  dx - dy = 0.\]
At $(1:0:0)$  
there is a saddle-node with Milnor number $\mu=4$, whose strong separatrix is $L$.
At  the vertical infinity there is  a cubic singularity.  The  blowing-up at this point produces
a Riccati foliation  relatively  to the vertical  lines. The exceptional line $E_1$ is invariant and the unique singularity along $E_1$ is  a 
saddle-node  with Milnor number $\mu=4$, weak separatrix $E_1$ and strong separatrix $\overline{L}$. This curve is the unique invariant fiber whose corresponding multiplicity is $4$.  The foliation on $\mathbb{F}_1$  is already a nef  model.  The  cotangent line bundle can be computed as above,
\[  T^*\calf_{nef}  =  \calo(-2 \overline{L}) \otimes \calo(  4  \overline{L} ) =  \calo(2 \overline{L})\quad\mbox{and}\quad    \kappa(\calf) = 1.\]   
\end{exa}

\begin{exa}\label{novoexemplograu4}
Consider the Shamsuddin type foliation of degree $4$ in the plane associated to 
\[\omega = ( (x^3 + 1) y +  5 x^4  - x^3  - 2 x^2 + 4 x      ) dx - dy =0.\]
At   $(1:-5:0)$ there is  a saddle-node with Milnor number $\mu=5$. 
At $(0:1:0)$  there is a quartic singular point (algebraic multiplicity $=4$). The foliation obtained after 
 blowing up this point is Riccati, having  just one singular point along $E_1$ which is a saddle-node, with Milnor number $\mu=5$, weak separatrix $E_1$ 
 and strong separatrix $\overline{L}$.

 The foliation on $\mathbb{F}_1$  is already a nef  model. The multiplicity  of  $\overline{L}$ as an invariant fiber is $5$  and as before we obtain
\[  T^*\calf_{nef}  =  \calo(-2 \overline{L}) \otimes \calo(5  \overline{L}) =  \calo(3 \overline{L})\quad\mbox{and}\quad  \kappa(\calf) =1.\]   
\end{exa}

\begin{exa}\label{shamgrau8}
 From \cite[Exa. 13.3.7]{NoT} we have  a foliation of degree $8$  of  Shamsuddin type given by
 \[ \omega = ( (x^3 +1) y + x^8 + 3 x^5 + 1)  dx - dy = 0;\]
on $\C^2$ it has neither algebraic invariant curve nor singularities. At $(0:1:0)$  there is a highly degenerate singularity (with algebraic multiplicity $=8$).
The blowing-up at this point produces a  Riccati foliation, which is not reduced yet. It needs  four additional  blowing-ups. From  the second blown up point  to the fifth, the algebraic multiplicity is  $=2$. 
Along the fifth  exceptional line $E_5$  there are  three singular points: two saddle-nodes  with Milnor number $\mu=5$  and one Morse point. 
The  foliation obtained is  reduced  but not a relatively minimal  model. 

To obtain a relatively  minimal  model we contract $\overline{L}$, $E_2$, $E_3$ and $E_4$, in this order. 
\[\includegraphics{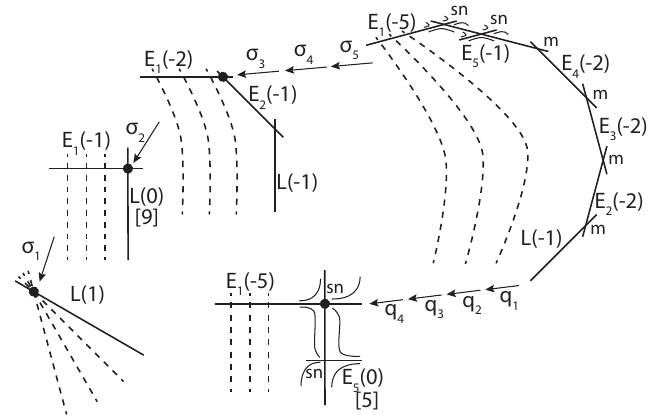}\] 
As we contract foliated  exceptional curves with Morse points, the strict transform of $E_5$  contains just two saddle-nodes. In this example 
the relatively minimal model is already  a  nef model (on the Hirzebruch surface   $\mathbb{F}_5$).  

Arguing as in the precedent example we deduce
\[  T^*\calf_{nef}  =  \calo(-2  \overline{E}_5 )  \otimes \calo(  5  \overline{E}_5) =  \calo(3 \overline{E}_5) \quad\mbox{and}\quad   \kappa(\calf) = 1.\] 
\end{exa}

\begin{exa}\label{nilp} This example is not of Shamsuddin type but is of Riccati type. From \cite{Moulin} (see also \cite{CouQuad}) we consider the foliation of degree  $ 2 $  in the projective plane  associated to  
 \[ \omega =  dx - (x^2 + y)  dy   =   0.\]
 At the horizontal  infinity point  there  is a radial point (Milnor number $\mu=1$), whose  
 blowing-up  produces a Riccati foliation completely  transverse to the exceptional line $E_1$.
 There
 is   a nilpotent
 singularity  at the vertical  infinity point whose Milnor number is $\mu=6$ (thanks to Darboux's formula in the plane, \textit{cf.} \cite[p. 19]{BrBG}).
 The blowing-up at the nilpotent singular point produces an invariant exceptional curve $E_2$ having  just one quadratic  singularity, at the intersection 
 with $\overline{L}$. The blowing-up at this quadratic singularity  produces  three singularities  along $E_3$: 
 two of them being non-degenerate reduced singularities, placed at the intersections of $E_3$  with $\overline{L}$ and $E_2$,  and a third one being a saddle-node, with strong separatrix $E_3$. We assert that the  Milnor number of 
this saddle-node is $\mu =4$: indeed, it follows from the diagram on the top of \cite[p. 47]{BrBG} and the fact that a nilpotent singular point in $\P^2$ has Milnor number $\mu=6$.
 
The foliation obtained is not a nef model for the original Riccati foliation; denote as $\pi$ the corresponding adapted fibration. The  morphism  $\rho = \rho_2\circ \rho_1$ contracts   two $(-2)$-curves and produces  
a  singular surface 
with two quotient  singularities  $q_1,q_2$  along  the strict  transform  of $E_3$ (where there is also a   saddle-node    with Milnor number $\mu= 4$). The resulting foliation of this surface is nef.
\[\includegraphics{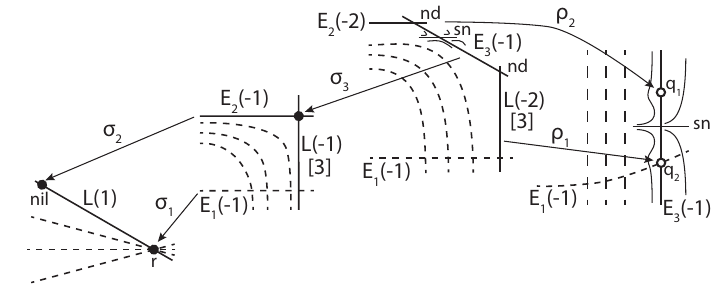}\] 
We deduce (see \cite[p. 20]{BrPisa}) 
 \[ deg(\pi_*(T^*{\calf_{nef}})) =  -2 + \frac{4 + 1}{2}  =  \frac{1}{2}    > 0 \quad  \mbox{and}\quad  \kappa ( \calf ) = 1,   \]
where  the leftmost term is the degree of the rational divisor $\pi_*(T^*{\calf})\in \mbox{Pic}(\P^1)\otimes \mathbb{Q}$.
\end{exa}

All  the remaining   examples  have $\kappa (\calf) =2$.

\begin{exa}\label{Cerveauex}
Consider the foliation $\calf$ of degree $\deg(\calf )= 2$   in the projective plane  associated to the following  equation in $\C^2$:
\[ \omega =  x \,(1+xy)\,   dx - (1+xy+x^3)\,   dy  =  0 \]
which was taken from \cite[Prop. $1.3$]{Ce}. The line at infinity  is  \emph{not} invariant and  there is  just one 
saddle-node  with  Milnor  number $\mu=7$ at infinity. So the singularity of $\calf$ is reduced. The cotangent
line bundle is $T^*{\calf} = \calo (1)$ and $\kappa (\calf) = 2$. As there is no curve with negative self intersection in $\Pd$, $\calf$ is its own nef model.
The group   $Pol(\calf|_{\C^2})$  contains the linear automorphism $L_j(x,y)\mapsto (j \cdot   x , j^2 \cdot   y)$
where $j$ is 
a primitive cubic root of the unity and 
$L_j^*( \omega)  =  j^2  \cdot  \omega.$
The  affine line  $x=0$  is  transverse   to the   foliation and, for $c\neq 0$,  the affine  line $x= c$ has  one movable  tangency.
\end{exa}

\begin{exa}\label{Nofamilia}

From  \cite{No}, we obtain  a   family  of foliations $\calf_{k}$ of degree $k\geq 2$  in the plane associated to simple derivations. It is defined by the family of $1$-forms
\[\omega_{k} =  (y^k + x) dx  - dy, ~~~k \in \N^*.\]
For $k=2$ this coincides with Example~$\ref{nilp}$,  up  to permutation of $(x,y)$.
     
We assert that  $\kappa (\calf_k) =2$ for all $k \geq  3$. At the vertical infinity point, each  $\calf_{k}$
has  a radial point $p$.    The exceptional line of the blowing-up at $p$  
belongs to the contact divisor
between  the transformed foliation and the rational fibration corresponding to the pencil of lines by $p$.  
For simplicity, let's focus  on  the case $k=3$. The reduction of singularities  of $\calf_3$ is made up of $4$ blowing-ups 
at quadratic singularities of the foliation. The fourth blowing-up introduces $E_4$ having  a saddle-node 
and $2$ extra non-degenerated reduced points (at the  intersections of  $E_4$ with 
the strict transforms of $E_2$ and $E_3$). 
The  Zariski decomposition  of 
the cotangent line bundle is
\[T^*{\overline{\mathcal{F}_3}}  \equivq     {\bf P}   +  \frac{2}{3} \overline{L}  + \frac{1}{3} \overline{E_3} +  \frac{1}{3} \overline{E}_2.  \]
The nef model  is obtained after  contraction 
of the support  of ${\bf N}$  and introduces  two  quotient singularities  of the surface $q_1,q_2$.
\[\includegraphics{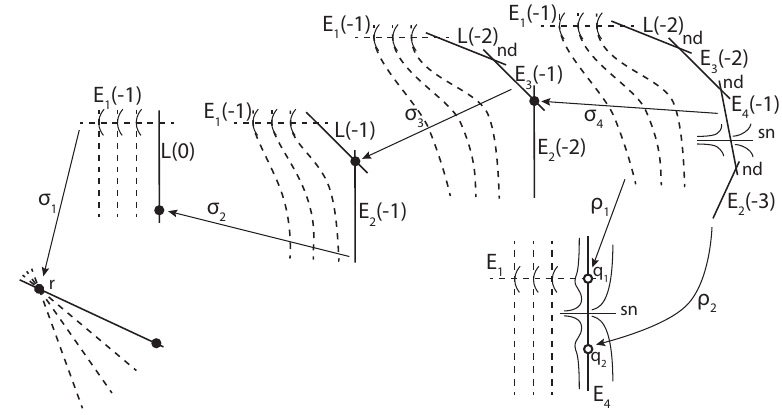}\]

Since the   $4$  blown up singularities were  quadratic ($l(\calf)= 2$),  we can compute   
\[ T^*{\overline{\mathcal{F}_3}} \cdot  T^*{\overline{\mathcal{F}_3}}  =  (deg(\mathcal{F}_3)-1)^2 - 
\sum_{i=1}^4  (l_{p_i}(\mathcal{F}_3) -1)^2 = 4 - 4 = 0\]
Combining this with 
\[{\bf N}\cdot {\bf N} =  ( \frac{2}{3} \overline{L}  + \frac{1}{3} \overline{E_3} +  \frac{1}{3} \overline{E}_2)^2 = -1  \]
and   ${\bf P}\cdot {\bf N} = 0$,   the conclusion is ${\bf P}\cdot {\bf P} = 1$. Therefore the  numerical  
Kodaira dimension is $2$  and  also    $\kappa (\calf_3) =2$. 
\end{exa}

\begin{exa}\label{jordan}
From  \cite{J} we have a  foliation $\calf$ of  degree $3$ in the projective plane associated to 
\[  \omega = y^3 \,  dy  -  (1-xy) \,  dx.\]
 At $(0:1:0)$, there is  a quadratic 
 singularity whose reduction is composed  by $3$  blow ups. The foliation has  $\kappa (\calf) =2$  and its
 nef model
 is  not much different  than the one  of  Example  \ref{Nofamilia}. At last, we remark that   
 the  affine lines  $y=c$, $c\neq 0$, exhibit one movable tangency point  with the  foliation (at the intersection of $y=c$ and  
 $y = \frac{1}{x} $). The  affine line $y=0$  is   completely  transverse to the foliation, a property that  will  be useful
 in   Section \ref{recobrimentos}. 
 
\end{exa}

\begin{exa}\label{surj}
From    \cite{SK}   we have examples of  foliations  $\calf_{r,s,g}$  in the projective plane with degrees    $\deg(\calf_{r,s,g}) = s + 1$, defined  for $r,s \in \N$ such that $r+2\leq s$ and $g\in \C^*$ by  
\[  \omega_{r,s,g} :=  (x y^s + g  )\,  dx - y^r \, dy. \]
We have a singular  point at $(0:1:0)$ with algebraic multiplicity $s$. At  $(0:0:1)$, there is  a singular  point with algebraic multiplicity $ 3$.   Except for $y=0$, all  horizontal affine 
lines  exhibit   one  movable   contact point  with the restricted  foliation ${\calf}_{r,s,g}|_{\C^2}$. 
However, the affine  line  $y=0$ is completely  transverse to the foliation. 

\end{exa}

\begin{exa}\label{odani}
 
According to \cite{Odani}, the  foliations in the plane  associated to the Li\'enard equations  
\[ \omega  :=   (f(x) \cdot y + g(x) )\,  dx + y \, dy = 0,\quad f, g \in \C[x]    \]
do \emph{not}  have  algebraic solutions in $\C^2$  if three  conditions are satisfied:   $i)$  $f,g \neq 0$, $ii)$ $\deg(f) \geq \deg(g)$ and 
$iii)$ $ \frac{f}{g}$ is not constant.
These  foliations do not have  singularities  in $\C^2$   exactly  when    $g(x) = c \in \C^*$. 
Therefore, to produce  foliations 
associated  to simple derivations   it suffices  to take a constant  $g$  and a non-constant polynomial $f(x)$.  In this case, the affine line $y=0$ 
is everywhere transverse to the foliation while the horizontal line $y=c$   exhibit $\deg(f)$ tangencies   with the foliation. 
The  line  at infinity is invariant by the  extended foliation.  
\end{exa}

Now we establish some birational (non)-equivalences among the Examples.
\begin{pro}\label{classes}
\leavevmode
\vspace{-0,4cm}
\begin{enumerate}[$($i$)$] 
\item \label{equiv i}Example~$\ref{GL}$ is isomorphic by a linear transformation to  a foliation of Shamsuddin type.
\item \label{equiv ii}Examples~$\ref{novoexemplograu4}$ and $\ref{shamgrau8}$  are  equivalent by a polynomial  automorphism  of degree five.
\item \label{equiv iii}The foliations of  Example~$\ref{shamquadratico}$ and  Example~$\ref{shamgrau3}$   are  \emph{not}  birationally equivalent.
\end{enumerate}
\end{pro}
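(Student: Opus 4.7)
For (\ref{equiv i}), I would introduce the linear change of coordinates
\[
(u,v) := (2x+y,\, 2x), \qquad (x,y) = (v/2,\, u-v).
\]
A direct substitution in the $1$-form of Example~\ref{GL} yields, after cancellations,
\[
\omega \;=\; \Bigl(1 + \tfrac{uv}{2}\Bigr)\,du \;-\; dv,
\]
which, up to a global sign, is a Shamsuddin $1$-form with $a(u) = u/2$ and $b(u) = 1$.

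For (\ref{equiv ii}), both foliations are Shamsuddin with the \emph{same} coefficient $a(x) = x^3+1$, differing only in their affine terms $b_i(x) = P_i(x)$. An automorphism of the form $\Phi(x,y) = (x,\, y + R(x))$ with $R \in \C[x]$ preserves the Shamsuddin shape and shifts $b(x)$ by $(x^3+1)R(x) - R'(x)$. Hence $\Phi^*\omega_2 = \omega_1$ reduces to the polynomial equation
\[
R'(x) - (x^3+1)R(x) \;=\; P_2(x) - P_1(x) \;=\; x^8 + 3x^5 - 5x^4 + x^3 + 2x^2 - 4x + 1.
\]
Matching the leading $x^8$ term forces $\deg R = 5$, and then solving coefficient-by-coefficient yields the unique polynomial solution $R(x) = -x^5 - 2x^2 - 1$. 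The resulting map $\Phi(x,y) = (x,\, y - x^5 - 2x^2 - 1)$ is thus a polynomial automorphism of $\C^2$ of degree~$5$ conjugating the foliation of Example~\ref{novoexemplograu4} to that of Example~\ref{shamgrau8}.

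For (\ref{equiv iii}), I would distinguish the two foliations by a birational invariant finer than the Kodaira dimension (which equals~$1$ in both cases). Since both are Riccati foliations with $\kappa = 1$, Lemma~\ref{lemIitaka1} identifies the adapted ruling $\pi : \mathbb{F}_1 \to \Pu$ with the Iitaka fibration, and the cotangent bundle of the (smooth) nef model takes the form $T^*\calf_{nef} = \pi^*\calO_{\Pu}(d)$ for some integer $d \geq 1$. This $d$ is a birational invariant of $\calf$: it is the leading coefficient of the plurigenus polynomial $h^0\!\bigl((T^*\calf_{nef})^{\otimes n}\bigr) = nd + 1$, which is intrinsic because the McQuillan nef model is canonical in the birational class once $\kappa \geq 0$. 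From the explicit descriptions in Examples~\ref{shamquadratico} and~\ref{shamgrau3} one reads $d = 1$ and $d = 2$, respectively, so the two foliations cannot be birationally equivalent. The main conceptual point is precisely this birational invariance of $d$; parts (\ref{equiv i}) and (\ref{equiv ii}) are explicit polynomial calculations.
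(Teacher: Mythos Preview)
Your proof is correct, and for parts (\ref{equiv i}) and (\ref{equiv ii}) it coincides with the paper's argument: an explicit linear change of coordinates for (\ref{equiv i}) (the paper uses $x=v,\ y=u-2v$, yielding $2\,dv-(1+uv)\,du$, which is the same Shamsuddin form up to rescaling), and for (\ref{equiv ii}) the explicit triangular automorphism $(x,y)\mapsto(x,\,y+x^5+2x^2+1)$ --- precisely the inverse of your $\Phi$. The paper additionally motivates (\ref{equiv ii}) geometrically, via a chain of elementary transformations from the $\mathbb{F}_5$ nef model of Example~\ref{shamgrau8} down to $\mathbb{F}_1$, but the verification is the same polynomial identity you found.

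For (\ref{equiv iii}) you take a genuinely different route. The paper invokes the birational invariant $g(\calf)$ of \cite[p.~139]{Me}, obtaining $g=2$ and $g=3$ respectively. Your invariant --- the integer $d$ with $T^*\calf_{nef}=\pi^*\calO_{\Pu}(d)$, read off as $d=1$ and $d=2$ from the explicit nef models --- is equally effective and more self-contained, since it reduces to the birational invariance of the plurigenera $P_n(\calf)=nd+1$. One small correction of justification: the reason $d$ is a birational invariant is not that ``the McQuillan nef model is canonical'' (uniqueness of nef models is a delicate matter), but simply that the plurigenera $h^0\bigl(M,(T^*\calfp)^{\otimes n}\bigr)$ are the same for \emph{any} reduced model $\calfp$, and in both examples the exhibited smooth $\mathbb{F}_1$ model is already reduced. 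With that adjustment your argument stands.
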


\begin{proof}
  ~\\
 $(\ref{equiv i})$.
With the  linear  change of variables $y = u - 2 v$, $   x = v$,    from the equation of   Example  \ref{GL} we   obtain the Shamsuddin type
\[ \eta = 2   dv - ( 1 + v u)  du\]
\noindent  $(\ref{equiv ii})$.We start    with   the  nef model  of  Example  $\ref{shamgrau8}$  in the  Hirzebruch surface $\mathbb{F}_5$.
After an elementary transformation we pass  to $\mathbb{F}_4$ keeping the saddle-nodes  and the  multiplicity $[5]$  of the unique invariant fiber. This is shown in the next figure.
\[\includegraphics{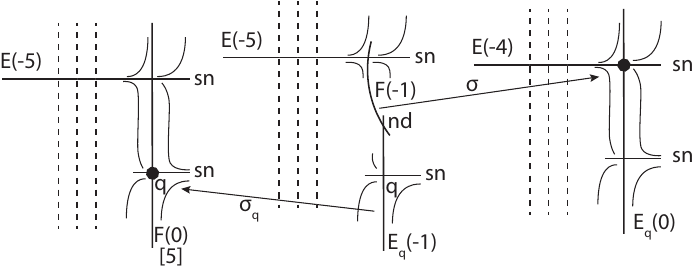}\] 

After this we  perform   three more    elementary  transformations  
to obtain   a foliation  of   $\mathbb{F}_1$ 
and then, after 
contraction of the  $(-1)$-section, we arrive at the  projective  plane.  The  pair  of saddle-nodes  on the  invariant vertical   fiber of $\mathbb{F}_5$
is transformed into a    pair of singularities along an invariant projective line  of a degree   $4$  foliation.   The net effect in the plane  can be described  concretely    
by means  of a polynomial  isomorphism  of  $\C^2$. Indeed, 
\[ \eta  :=  ( (x^3 + 1) y +  5 x^4  - x^3  - 2 x^2 + 4 x      ) dx - dy      \]
defines the foliation of Example  \ref{novoexemplograu4} and, if  $R: \C^2\to \C^2$ is  
\[(x,y) \mapsto  ( x\, , \, y + x^5 + 2 x^2 + 1)\]                                               
then  $R^*(\eta)$ is  the $1$-form of  Example  \ref{shamgrau8}.

 \noindent$(\ref{equiv iii})$.
We use   the  birational  invariant  $g(\calf)$ defined in \cite[p. 139]{Me}.  
In  Example~$\ref{shamquadratico}$ and  Example~$\ref{shamgrau3}$ the  invariants are   $g(\calf) = 2$ and  $g(\calf)= 3$, respectively.  
\end{proof}
  
Proposition~$\ref{classes}$-$(\ref{equiv ii})$  illustrates   the general   issue of finding the simplest (of least degree) plane birational model for foliations of the plane or
derivations.  Algorithmic procedures toward 
this objective would be of great utility.

 \section{Proof of Theorem~$A$}
 In order to prove Theorem A we proceed by contradiction: suppose $Bir(\F)$ is infinite.
From our assumption on algebraic invariant curves, $\F$ cannot have a rational first integral. We have two cases: either
\begin{enumerate}
\item  \label{cas a)} there exists a birational model $(X,\tilde{\F})$ of $(\Pd,\F)$ such that $\Aut(\tilde{\F})=Bir(\tilde{\F})$ or
\item \label{cas b)} for every birational model $(X,\tilde{\F})$, the inclusion $\Aut(\tilde{\F})\subset Bir(\tilde{\F})$ is strict.
\end{enumerate}

In case $(\ref{cas a)})$, we may apply \cite[Thm. $1.1$]{CF} to $(X,\tilde{\F})$, as fibrations are excluded we have one of the following situations.
\begin{enumerate}
\item[$(i)$] There exists a non trivial holomorphic vector field $\der$ on $\tilde{X}$ defining a one parameter subgroup of $\Aut(\tilde{\F})$ or
\item[$(ii)$] the surface $X$ is a generalized Kummer surface (see \cite[Exa. $1.1$]{CF}), $X$ is a quotient of an abelian surface $A$ and $\tilde{\F}$ lifts to $A$ as a linear foliation $\G$.
\end{enumerate}
In case $(\ref{cas b)})$, we may apply \cite[Thm. $1.2$]{CF}. Since fibrations are excluded, we are in the situation of   \cite[Exa. $1.3$]{CF}, in particular up to passing to a birational model $(X,\tilde{\F})$,
\begin{enumerate}
\item[$(iii)$] the surface $X$ is a  finite quotient of $S=\Pu\times \Pu$ and $\tilde{\F}$ lifts to a foliation $\G$ of $S$ given by a differential form $\alpha wdz+\beta zdw,$ for some $\alpha,\beta\in \C$.
\end{enumerate}

In cases $(ii)$ and $(iii)$ the foliation $\G$ has Kodaira dimension $0$. By the remark in \cite[p. $20$]{BrBG} or the proof of \cite[Lemme $3.2.8$]{Tou}, this forces $\kappa(\tilde{\F})\leq0$ and contradicts Theorem \ref{kodsimples}.

So we only need to derive a contradiction from situation $(i)$ to complete the proof. Moreover, if $\der$ is tangent to $\tilde{\F}$, we may use \cite[Prop. $6.6$ $(iii)$]{BrBG} to see that $\tilde{\F}$ is a Riccati foliation with two distinct adapted fibrations and consequently $\kappa(\F)=0$, contradicting  Theorem~\ref{kodsimples}. 

The conclusion is given by the following argument, proposed by Jorge Pereira. If $\der$ is not tangent to $\tilde{\F}$ consider its image $\tilde{\der}$ in $\Pd$ and take a rational $1$-form $\omega$ defining $\F$.
The form $\eta=\frac{\omega}{\omega(\tilde{\der})}$ is closed, by the computation \cite[Proof of Cor. $2$]{PS} inherited from \cite{CeMa}. The poles of $\eta$ give $\F$-invariant algebraic curves, so that $\eta$ has no poles in $\C^2$. Subsequently, the first integral $\int_\star^x \eta$ has no monodromy and gives a rational first integral for $\F$, contradiction.
\qed

\section{Polynomial  symmetries  of foliations   associated to simple derivations}\label{recobrimentos}

Recall that for a foliation $\mathcal{H}$ of $\C^2$, the group $Pol(\mathcal{H})$ is defined as the group of polynomial automorphisms of $\C^2$ that preserve $\mathcal{H}$.
If $\omega$ is a polynomial $1$-form with isolated zeroes that defines $\mathcal{H}$ and $\phi$ is a polynomial automorphism of $\C^2$, then $\phi\in Pol(\mathcal{H})$ if and only if there exists $c\in \C^*$ such that $\phi^*\omega=c\omega$, see Remark~$\ref{constant factor}$.

\begin{pro}\label{pullbackfol}
 
Given     $n\geq 2$ and   $B>0$, there exists a foliation  $\calg$  
 associated to a simple derivation and an element in  $Pol(\calg|_{\C^2})$   of order  $n$ and degree greater than $B$.
\end{pro}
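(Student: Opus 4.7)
The plan is to construct $\calg$ as a finite cyclic ramified cover of a base foliation $\calf$ satisfying $(a)$ and $(b)$, exploiting the transverse line to propagate the simple-derivation property to $\calg$ while producing a deck-type symmetry of arbitrarily large degree. After an affine change of coordinates I may assume that the transverse affine line furnished by $(b)$ is $\{y=0\}$ (suitable starting foliations are available, e.g.\ Example~\ref{jordan} or Example~\ref{odani}). Writing a defining $1$-form of $\calf$ as $\omega_\calf = P(x,y)\,dx + Q(x,y)\,dy$, the transversality of $\{y=0\}$ amounts to $P(x,0)$ being a nowhere-vanishing polynomial in $x$, hence a nonzero constant $c\in\C^*$.

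Fix a primitive $n$-th root of unity $\zeta$, choose an integer $k>B$ with $k\not\equiv 0 \pmod n$, and consider the finite surjective polynomial map
\[\phi\colon \C^2\longrightarrow \C^2, \qquad \phi(u,v) := (u+v^k,\; v^n),\]
which is \'etale away from the ramification line $\{v=0\}$. I set $\calg := \phi^*\calf$. To check that $\calg$ is associated to a simple derivation, note that
\[\phi^*\omega_\calf = P(u+v^k,v^n)\,du + \bigl[kv^{k-1}P(u+v^k,v^n) + nv^{n-1}Q(u+v^k,v^n)\bigr]\,dv,\]
whose $du$-coefficient along $\{v=0\}$ equals $P(u,0)=c\neq 0$; hence $\phi^*\omega_\calf$ has no zero there, and outside $\{v=0\}$ the \'etaleness of $\phi$ transports absence of singularities from $\calf|_{\C^2}$ to $\calg|_{\C^2}$. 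For the absence of invariant algebraic curves, any $\calg$-invariant curve $C\subset \C^2$ would push forward via the finite surjective map $\phi$ to an algebraic curve $\phi(C)\subset\C^2$ tangent to $\calf$ at a general point, contradicting hypothesis $(a)$.

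Finally I exhibit the required symmetry
\[T(u,v) := \bigl(u+(1-\zeta^k)v^k,\ \zeta v\bigr).\]
An easy induction gives $T^j(u,v) = (u+(1-\zeta^{jk})v^k,\, \zeta^j v)$, so $T^n=\mathrm{id}$ and $T$ has order exactly $n$; the identity $\phi\circ T=\phi$ gives $T^*\calg=\calg$, so $T\in Pol(\calg|_{\C^2})$; and the choice $k\not\equiv 0\pmod n$ ensures $1-\zeta^k\neq 0$, so $\deg(T)=k>B$. The only delicate verification is the smoothness of $\calg$ along the ramification line of $\phi$, which is precisely where hypothesis $(b)$ enters decisively; the remaining checks are routine consequences of \'etaleness of $\phi$ outside its branch locus.
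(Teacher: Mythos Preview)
Your proof is correct and follows essentially the same approach as the paper. The paper proceeds in two steps---first pulling back by the cyclic cover $(x,y)\mapsto(x^n,y)$ to obtain a linear order-$n$ symmetry, then conjugating by a triangular automorphism $P_\tau(x,y)=(x,y+\tau(x))$ to raise the degree---whereas you package both steps into the single pullback $\phi(u,v)=(u+v^k,v^n)$; after swapping the roles of the coordinates and taking $\tau(v)=v^k$, this is exactly the composite $\phi_n\circ P_\tau$ from the paper, and your $T$ is the corresponding deck transformation $P_\tau^{-1}\circ T_\zeta\circ P_\tau$.
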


\begin{proof}
Let $\calf$ be a foliation of the projective plane $\P ^2 = \C^2 \cup L_{\infty}$ with the following two properties: 

\begin{enumerate}[$(a)$]
\item \label{sym a} $\calf$ is associated  to  a simple derivation,
\item \label{sym b} there exists   some  affine  straight  line      completely transversal to the foliation $\calf|_{\C^2}$.
\end{enumerate}
Up to an affine transformation, we can suppose that the line in $(\ref{sym b})$  is $x=0$. Consider the $n$ to $1$  rational   map  $\phi_n: \C^2 \to \C^2$ given by $\phi_n(x,y) = (x^n,y)$. 
The foliation $\calf$ is defined by a polynomial $1$-form $\omega = a(x,y)dx +  b(x,y)dy$.  

The map $\phi_n$ extends to a birational map $\widehat{\phi}_n: \Pd\dasharrow \Pd$ and let  $\calg_0=\widehat{\phi}_n^*\F$. 
It is  defined by the 1-form $\phi_n^*(\omega) = a(x^n,y) d(x^n) + b(x^n,y) dy.$

Since the affine line  $x=0$  is supposed to be completely  transverse to $\calf$, then $\calg$  has no singularity along $x=0$. Note also that in $\C^2\setminus \{x=0\}$
the map $\phi_n(x,y) = (x^n , y)$  is a local isomorphism, so the pull-back  does  not introduce any singularity. Any algebraic ${\G_0}\vert_{\C^2}$-invariant curve would descend to an ${\F}\vert_{\C^2}$-invariant curve, so that no such curve exists. Hence  $\calg_0$  is associated to a  simple derivation.

On the other hand, let $\xi$ be a primitive $n$-th root of unity. The linear automorphism $ T_{\xi}( x, y) =  (\xi \cdot x , y )$ preserves $\calg_0$ and has order $n$. 
Note that if $\der_n$ is the derivation dual to the form  $\phi_n^*(\omega)$, then $T_{\xi} \not\in \Aut(\der_n)$  because $T_{\xi}^*(\phi_n^*(\omega))= \phi_n^*(\omega)$  and   $Jac(T_{\xi}) = \xi\neq 1$.

Now, for any polynomial $\tau\in \C[x]$, consider the automorphism of $\C^2$ defined by \[P_{\tau}  (x,y) = (x  \, , \,  y +\tau(x))\] 
and set $\calg:=P_{\tau}^*\calg_0$. If  the polynomial $\tau(\xi\cdot x)-\tau(x)$ has degree $d\geq 1$, (\textit{e.g.} if $\deg_x(\tau)=d$ and $(d,n)=1$) the map 
 \[\Gamma_{\xi,\tau}(x,y):=P_{\tau}^{-1}\circ T_{\xi}\circ P_{\tau}(x,y)=   ( \xi \cdot x \, ,\,   y + \tau(\xi\cdot x) -  \tau(x))\]
defines a polynomial automorphism of degree~$d$ and order $n$ in $Pol(\calg_0)$, which completes the proof.
\end{proof}

The properties $\ref{sym a})$ and  $\ref{sym b})$ used in  the proof of Proposition \ref{pullbackfol} are verified  in all examples of  Section~$\ref{Exemplos}$,  so that we have plenty of examples. 

For instance,  start with   Example \ref{shamquadratico}  induced by  $\omega  =  (x y + 1) dx - dy$.
Assume $n=2$, so $\phi_2(x,y) = (x^2, y )$. We obtain a foliation $\calg_0$ defined by the $1$-form \[\phi_2^*(\omega)  = (2 x^3 y + 2 x )  \,  dx - dy\] and  $Pol(\calg_0)$ contains the linear automorphism $T(x,y)= (-x,y)$. 
Now, we consider the automorphism $P_{\tau} (x,y) = ( x , y +x^3)$ and define 
\[\Omega:= P_{\tau}^*(   \phi_2^*(\omega)    )  =   (2 x^3 y-2 x^6+3 x^2+ 2 x) dx - dy.\]
Then the involution  $\Gamma_{\xi , \tau} = \Gamma_{-1, x^3}:  (x , y )  \mapsto ( - x ,   y - 2 x^3                    )$
preserves  the $1$-form $\Omega$.

At last,  note that the foliations that we have constructed in the proof of Proposition~$\ref{pullbackfol}$ do not have minimal degree  in their birational
classes,
due to the fact that the automorphism $P_{\tau}$ has degree greater that one. In other terms, such foliations  are not \emph{primitive} in the sense  of \cite{CD}. 
On the other side, the foliation  of   Example~$\ref{Cerveauex}$   is primitive  but the exhibited automorphism is linear.  

This raises the following question: \emph{Are there non-linear polynomial  automorphisms of primitive  foliations associated to simple derivations ?}

By \cite[Cor. p $293$]{MR1708643}, such automorphisms would necessarily be  conjugated to automorphisms of the form $(x,y)\mapsto(ax+P(y),by+c)$ with $a,b\in \C^*$, $c \in \C$ and $P\in \C[y]$. 

\appendix
\section{}\label{app}
This appendix explains and corrects a slight error in the proof of \cite[Prop. $6.6$]{BrBG}, case of rational surfaces. 

 Suppose $p: \mathbb{F}_n\to\Pu, n>0$, is the fibration  of a Hirzebruch surface and one has a Riccati foliation  induced by a global holomorphic vector field $v$ on $\mathbb{F}_n$. One can cover $\Pu$ by two open subsets $U_0$ and $U_{\infty}$, both isomorphic to $\C$, in such  a way that one has trivializations
$p\vert_{U_i}\simeq U_i\times \Pu\to U_i$. One can describe $\mathbb{F}_n$ by the gluing of the two products $U_i\times \Pu$ by the rule $(w,z)\sim(t,y)\Leftrightarrow [wt\neq0\mbox{ and } w=1/t \mbox{ and } z=t^ny]$ where $z$ and $y$ are affine charts of $\Pu$ and $w$ and $t$ are coordinates on $U_0$ and $U_{\infty}$, respectively. The projection $p$ then corresponds to the first projections of these products and the negative section $\Gamma$ is  the closure of $z=0$.
The holomorphic vector field $v_0=v\vert_{p^{-1}(U_0)}$ takes the form $(\star)$ of Section~$\ref{localform}$. The section $\Gamma$ is rigid and hence invariant by $v$, so that $a=0$.  
The transform $v_{\infty}$ of $v_0$ in the second chart must have the same form. A straightforward calculation then yields the following necessary and sufficient condition for the holomorphicity of $v_\infty$ on $U_\infty\times \Pu$, where $\C_k[w]$ are the polynomials of degree~$\leq k$.
\[\left \lbrace\begin{array}{l}
d\in \C_2[w], d=d_0+d_1w+d_2w^2,\\
b=b_0-nd_2w,\\
c\in \C_n[w].
\end{array}
\right.\]
So that $v_0$ takes the form
$v_0=d(w)\partial_w+z(b_0-nd_2w+zc(w))\partial_z$.
 The restriction $v\vert_{\Gamma}$ is holomorphic, hence, choosing well the coordinates $w,t$ on $\Pu$ in the beginning, one may suppose $d(w)=w^2$ or $d(w)=\lambda w$, $\lambda\in \C^*$ or $d=0$.  
 The option $d=0$ is excluded, since we have a Riccati foliation with respect to $p$.

If $b_0 \neq 0$ the foliation has two distinct singularities in the fiber $w=0$.
Otherwise $b_0=0$ and $v_0=d(w)\partial_w+z(-nd_2w+zc(w))\partial_z$. In these conditions the vector field vanishes on $w=0$ or $c_0:=c(0)\neq0$.

If $b_0=0$, $c_0\neq 0$ and $d=\lambda w$ then $d_2=0$ and
$v_{\infty}=-\lambda t\partial_t+y(n+yk(t))\partial_y$ where $k(t)=t^nc(1/t)$ has degree $n$. One sees that $v$ vanishes in two distinct points of the fiber $t=0$. 

If $b_0=0$, $c_0\neq 0$ and $d=w^2$, then $v$ vanishes only at $(w,z)=(0,0)$ in $\mathbb{F}_n$ and this contradicts Brunella's assertion concerning existence of a  zero of $v$ outside $\Gamma$ in \cite[p. 78]{BrBG}.

In conclusion, the argument of  \cite[p. 78]{BrBG} works fine except if the situation $b_0=0$, $c_0\neq 0$ and $d=w^2$ appears in the sequence of elementary transformations.
This corresponds to the vector fields of the form $v_0=w^2 \partial_w+z(-nw+zc(w))\partial_z$. 

However, in this case, one can describe an additional branch of the algorithm that allows to reach a holomorphic vector field on $\Pu\times\Pu$.
Perform an elementary transformation centered at the unique singularity of the vector field. From the foliation perspective, this yields a simple pole for the Riccati equation, with trivial monodromy.
 In these conditions, there is a sequence of elementary transformations centered at singularities of the foliation that eliminates the pole, the pole staying of order at most $1$ after each step. On the other hand, the restriction of the vector field to the negative section is unaltered outside $w=0$, so that its vanishing order at this point remains $2$ and that the vector field vanishes on the whole fiber once the Riccati foliation has a simple pole.
 One concludes that the proposed sequence of elementary transformations preserves the holomorphicity of the vector field. The disappearance of the foliation's singularities shows the obtained bundle is $\Pu\times\Pu\to \Pu$, due to Camacho-Sad formula.

\begin{rem}\label{second sing} In the above discussion, if $d(w)=\lambda w$, $\lambda \in \C^*$, and the Riccati foliation has a unique invariant fiber, then this fiber corresponds to a simple pole and has trivial monodromy so that it can be eliminated by a sequence of elementary transformations centered at singularities of the foliation. We conclude that in the presence of a unique invariant fiber, one always passes from a ``Riccati'' holomorphic vector field on $\mathbb{F}_n$ to a  holomorphic vector field on $\Pu\times \Pu$ by a sequence of elementary transformations centered at singularities of the foliation.
\end{rem}

\newcommand{\etalchar}[1]{$^{#1}$}
\providecommand{\bysame}{\leavevmode\hbox to3em{\hrulefill}\thinspace}
\providecommand{\MR}{\relax\ifhmode\unskip\space\fi MR }
\providecommand{\MRhref}[2]{%
  \href{http://www.ams.org/mathscinet-getitem?mr=#1}{#2}
}
\providecommand{\href}[2]{#2}

\end{document}